\theoremstyle{plain}
 \newtheorem{theorem}{Theorem}[section]
    \newtheorem{prop}[theorem]{Proposition}
    \newtheorem{lemma}[theorem]{Lemma}
    \newtheorem{corollary}[theorem]{Corollary}
     \newtheorem*{thma}{Theorem A}
    \newtheorem*{thmb}{Theorem B}
    \newtheorem*{thmc}{Theorem C}
\theoremstyle{definition}
    \newtheorem{definition}[theorem]{Definition}
    \newtheorem{example}[theorem]{Example}
\theoremstyle{remark}
\newcommand{\ccn}[1]{\mathcal{CC}_{{#1}}}
\newcommand{\ccnm}[2]{\mathcal{CC}_{{#1}, {#2}}}
\newcommand{\ccdnm}[2]{\mathcal{CC'}_{{#1}, {#2}}}
\newcommand{\ccdn}[1]{\mathcal{CC'}_{{#1}}}
\newcommand{\cckn}[2]{\mathcal{CC}^{#1}_{#2}}
\newcommand{\ccdkn}[2]{\mathcal{CC'}^{#1}_{ #2}}
\newcommand{\sd}{\operatorname{sd}}
\newcommand{\Id}{\operatorname{Id}}
\begin{document}

\title{ Higher analogs of simplicial and combinatorial complexity}

\author{Amit Kumar Paul}

\address{Department of Mathematics and Statistics\\
Indian Institute of Technology, Kanpur\\ Uttar Pradesh 208016\\India}
\email{kamitp@iitk.ac.in}

\date{\today}

\subjclass[2010]{Primary: 57Q05; \ secondary:  06A07, 55M30, 05E45}
\keywords{Simplicial complexity, finite space, higher toopological complexity, Schwarz genus}

\begin{abstract}
We introduce higher simplicial complexity of a simplicial complex $K$ and higher combinatorial complexity of a finite space $P$ (i.e. $P$ is a finite poset). We relate higher simplicial complexity with higher topological complexity of $|K|$ and higher combinatorial complexity with higher simplicial complexity of the order complex of $P$. 
\end{abstract}
\maketitle

\begin{section}{Introduction}
 The \emph{topological complexity } $TC(X)$ of a path connected space $X$ was introduced by Farber (see \cite{far}).   It is a measure of the complexity to construct a motion-planning algorithm on the space $X$. Let $I = [0,1]$ and $PX =X^I $ denotes the free path space. Consider the fibration

\begin{equation}\label{pi}
  \pi : PX \rightarrow X\times X,~~ \gamma \mapsto (\gamma(0), \gamma(1)).
 \end{equation}
 
\noindent Then $TC(X)$ is defined to be the least positive integer $r$ such that there exists an open cover $\{U_1, \cdots, U_r\}$ of $X\times X$ with continuous section of $\pi$ over each $U_i$ (i.e. a continuous map $s_i : U_i \rightarrow E$ satisfying $\pi \circ s_i = \Id_{U_i}$ for $i = 1, 2, \cdots, r$).  The idea was generalised by Rudyak to higher dimensions (see \cite{rud}). He introduced \emph{n-th topological complexity } $TC_n(X), \ n \geq 2$  such that $TC_2(X) = TC(X)$. We recall the definition of higher topological complexity in the next section.

In (\cite{gon}), Gonzalez used contiguity of simplicial maps to define \emph{simplicial complexity} $SC(K)$ for a simplicial complex $K$. This is a discrete analogue of topological complexity in the category of simplicial complexes.  He showed that $SC(K) = TC(\mid K \mid)$ for finite $K$, where $\mid K \mid$ is the geometric realization of $K$.
We introduce \emph{higher simplicial complexity} $SC_n(K)$ of a simplicial complex $K$ and generalise the above result. 
\begin{thma}

For a finite simplicial complex $K$,  $SC_n(K) = TC_{n}(\mid K\mid)$ for any $n \geq 2$.
\end{thma}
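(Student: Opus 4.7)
The plan is to mimic Gonzalez's argument in \cite{gon} for the $n=2$ case, adapted to the $n$-fold evaluation fibration that defines $TC_n$. I expect the definition of $SC_n(K)$ (to be given below in the paper) to mirror the usual one: one looks for the least $r$ such that, after iterating barycentric subdivision enough times, the $n$-fold simplicial product of $K$ admits a cover by $r$ subcomplexes, each carrying an $n$-tuple of simplicial maps whose pairwise contiguity classes encode a discrete substitute for a continuous section of the evaluation fibration $e_n\colon |K|^{J_n}\to|K|^n$. With this, I would prove the two inequalities $TC_n(|K|)\le SC_n(K)$ and $SC_n(K)\le TC_n(|K|)$ separately.

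For the inequality $TC_n(|K|)\le SC_n(K)$, I would take a simplicial $n$-motion planner realizing $SC_n(K)=r$. Each subcomplex $L_i$ of a subdivision $\operatorname{sd}^{k}(K^{\times n})$ in the cover gives rise, on geometric realization, to an open set by taking the open star of $|L_i|$ (or equivalently the realization of the dual cover), and these open sets cover $|K|^n$ by finiteness of $K$. The simplicial data on $L_i$ produces, via straight-line homotopies through the contiguity classes and the canonical homeomorphism $|\operatorname{sd}^k(K^{\times n})|\cong|K|^n$, a continuous section of $e_n$ over each such open set; the geometric realization of a contiguity chain yields an actual path in $|K|$ connecting the prescribed endpoints, glued across the wedge $J_n$ at the branch point.

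For the reverse inequality $SC_n(K)\le TC_n(|K|)$, I would start with an open cover $\{U_1,\dots,U_r\}$ of $|K|^n$ together with continuous sections $s_i$ of $e_n$, where $r=TC_n(|K|)$. Because $K$ is finite, $|K|^n$ is a compact metric space, so there exists a Lebesgue number for $\{U_i\}$. Taking enough iterated barycentric subdivisions of $K^{\times n}$, the closed stars of simplices become finer than this Lebesgue number, so each closed star lies in some $U_{i}$. The $n$ component paths of $s_i$ restricted to the realization of a subcomplex can then be approximated by simplicial maps via the simplicial approximation theorem (applied separately to the $n$ component path-coordinates and to the subdivision in the path parameter as in \cite{gon}), with the approximations fitting into contiguity chains compatible with the projection. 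This produces the required simplicial $n$-motion planner on $r$ subcomplexes.

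The step I expect to be the main obstacle is the reverse inequality, specifically arranging the simplicial approximations of the $n$ different path coordinates so that they remain consistent at the branch point of $J_n$ \emph{and} respect the contiguity relations needed by the definition of $SC_n(K)$. In the $n=2$ case this is already delicate in \cite{gon} because one must interleave subdivisions of $K^{\times 2}$ with subdivisions of the path interval; for general $n$ one has to handle $n$ intervals meeting at a common vertex simultaneously, and keeping the simplicial approximations matched at that common vertex is the technical heart of the argument. Once this is done, the argument reduces to the standard compactness/simplicial-approximation machinery used in \cite{gon}.
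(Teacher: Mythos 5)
Your overall two-inequality structure matches the paper, and you correctly anticipate the definition of $SC_n^k(K)$ (subcomplexes covering $\sd^k(K^n)$ on which the $n$ projections to $K$ lie in one contiguity class). However, in both directions you choose a heavier route than the paper and, in the reverse direction, you flag as ``the technical heart'' a difficulty that the paper's argument simply does not encounter.

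For $TC_n(|K|)\le SC_n(K)$, you propose to inflate each $|L_i|$ to an open set by taking its open star. This is not automatic: the contiguity data gives homotopies between the projections only on $|L_i|$ itself, and it is not immediate that these survive on the (strictly larger) open star. The paper avoids this entirely by appealing to the ENR characterization of $TC_n$ (Proposition~\ref{pro}, a straightforward generalization of Farber's Proposition~4.12): it suffices to cover $X^n$ by \emph{locally compact} subsets admitting continuous sections, and the closed subsets $|L_i|$ already qualify, via Lemma~\ref{lemm} which translates ``the $n$ projections are homotopic on $A$'' into ``$e'_n$ has a section over $A$''. No open-set inflation is needed.

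For $SC_n(K)\le TC_n(|K|)$, you plan to simplicially approximate the $n$ component paths of each section $s_i$, and you correctly worry about keeping the approximations consistent at the branch point of $J_n$. But the definition of $SC_n$ never asks you to approximate the section or the paths --- only to exhibit the $n$ restricted projections $\pi_1,\dots,\pi_n:L_i\to K$ in a common contiguity class. The paper's route is: take $L_i\subset\sd^k(K^n)$ to be the simplices realized inside $U_i$; since a section of $e'_n$ exists over $U_i$, Lemma~\ref{lemm} gives that $p_1,\dots,p_n$ are homotopic as maps $|L_i|\to|K|$; then the simplicial approximation statement (Proposition~\ref{prop}, from Spanier) says that after sufficiently many further subdivisions the corresponding simplicial approximations $\pi_1,\dots,\pi_n:\sd^{k'}(L_i)\to K$ lie in one contiguity class. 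The branch-point bookkeeping disappears because one never works with paths in $J_n$ at all --- only with $n$ maps $|L_i|\to|K|$ and the fact that pairwise homotopic maps have contiguous simplicial approximations. Reframing your reverse direction through Lemma~\ref{lemm} and Proposition~\ref{prop} closes the gap you identified.
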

\noindent (See Theorem \ref{tst}.)

 A combinatorial approach to topological complexity was introduced by Tanaka (cf. \cite{tana}). The basic idea of  Tanaka's paper is to describe topological complexity by combinatorics of finite spaces  i.e. connected finite $T_0$ space ( see \cite{sto}). He used an analogue of the above path-space fibration for finite spaces  to define  \emph{combinatorial complexity} $CC(P)$. It is shown that $CC(P) = TC(P)$.  
 We introduce an analogue  \emph{higher combinatorial complexity} $\cckn{}{n}(P)$ and prove the above result for higher dimensions.
   \begin{thmb}
 For  any finite space $P$, we have $\cckn{}{n}(P) = TC_n(P) $, for any  $n \geq 2$. 
\end{thmb}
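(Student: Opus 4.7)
The plan is to mirror Tanaka's argument for the $n=2$ case while replacing the ordinary path-space fibration by its higher analogue $\pi_n \colon P^I \to P^n$, which sends a path $\gamma$ to $(\gamma(0), \gamma(\tfrac{1}{n-1}), \ldots, \gamma(1))$, so that $TC_n(P)$ is realised as the Schwarz genus of $\pi_n$. Since $P$ is an Alexandrov space (finite $T_0$-space), the minimal open sets $U_x = \{y \in P \mid y \leq x\}$ form a basis, and their products give a basis for $P^n$; this compatibility between order-theoretic and topological data is the bridge on which the whole argument rests.

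First, to establish $TC_n(P) \leq \cckn{}{n}(P)$, I would take a combinatorial cover of size $r = \cckn{}{n}(P)$ and promote each combinatorial ``motion planner'' to a genuine continuous section of $\pi_n$ over the corresponding open subset of $P^n$. The combinatorial data should specify, for each $n$-tuple $(x_1, \ldots, x_n)$ in its domain, a sequence of order-relations in $P$ that threads through the prescribed waypoints; a McCord-style construction then converts such a chain into an honest path in $P$ which visits $x_i$ at time $\tfrac{i-1}{n-1}$. Order-preservation of the combinatorial assignment in each coordinate will guarantee continuity of the resulting section over the entire piece of the cover.

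Second, for the reverse inequality $\cckn{}{n}(P) \leq TC_n(P)$, I would begin with an open cover $\{U_1, \ldots, U_r\}$ of $P^n$ equipped with continuous sections $s_i$ of $\pi_n$, refine the cover to basic opens of the form $U_{x_1} \times \cdots \times U_{x_n}$, and extract from each section the sequence of order-comparability relations traced out by the path $s_i(x_1, \ldots, x_n)$ in $P$. Reading off this discrete data gives the required combinatorial motion planner, and finiteness of $P$ guarantees a finite subcover of the same cardinality $r$, yielding the desired bound.

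The main obstacle lies in the first direction, specifically in proving that the constructed section depends continuously on the input $n$-tuple over an entire basic open set rather than only pointwise. This step upgrades Tanaka's single-concatenation argument for $n=2$ to an $(n-1)$-fold concatenation, and one must verify that the intermediate junction points behave consistently as the waypoints vary within their minimal neighbourhoods. This should follow from the order-preserving nature of the combinatorial data together with the standard description of paths in Alexandrov topology, but it is where the adaptation from $n=2$ to general $n$ requires the most care.
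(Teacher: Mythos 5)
Your high-level strategy — realise $TC_n(P)$ as the Schwarz genus of the waypoint evaluation map $P^I\to P^n$, and prove the two inequalities by transferring sections between the combinatorial path space $P^{J_{(n-1)m}}$ and the topological path space $P^I$ — is the same as the paper's. But both directions, as you describe them, have real gaps.

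For the direction $TC_n(P)\leq \ccn{n}(P)$ (combinatorial $\Rightarrow$ continuous), you flag continuity of the assembled section as ``the main obstacle'' and worry about the behaviour of intermediate junction points as the waypoints vary. In fact this step is trivial once one sets it up functorially: take McCord's continuous weak equivalence $\alpha\colon[0,(n-1)m]\cong|\mathcal K(J_{(n-1)m})|\to J_{(n-1)m}$, reparametrise to get $\beta\colon I\to J_{(n-1)m}$ with $\beta(\tfrac{j-1}{n-1})=(j-1)m$, and precompose: $\beta^*\colon P^{J_{(n-1)m}}\to P^I$ is continuous (precomposition with a fixed continuous map always is) and commutes with the two projections to $P^n$. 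Then $\beta^*\circ s_i$ is a continuous section on $Q_i$, with no case analysis on waypoints. The obstacle you identify disappears with the right construction; there is no $(n-1)$-fold concatenation to worry about on the topological side.

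The genuine gap is in your direction $\ccn{n}(P)\leq TC_n(P)$. Refining the cover $\{U_i\}$ into basic opens $U_{x_1}\times\cdots\times U_{x_n}$ and then invoking ``a finite subcover of the same cardinality $r$'' does not work: a refinement is not a subcover, and the number of pieces goes up, so you do not get a bound of $r$ on $\ccn{n}(P)$. Moreover ``reading off order-comparability relations traced out by $s_i(x_1,\ldots,x_n)$'' pointwise does not by itself yield a single order-preserving map $Q_i\to P^{J_{(n-1)m}}$; you would have to paste the pointwise data consistently, which is exactly the hard part. The argument that actually closes this gap uses the exponential law: a continuous section $s_i\colon Q_i\to P^I$ corresponds to a continuous path $f_1\colon I\to P^{Q_i}$ in the \emph{finite} space $P^{Q_i}$. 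By the homotopy theory of finite spaces such a path factors (up to endpoints) through an order-preserving map $f_2\colon J_s\to P^{Q_i}$, and one then builds an order-preserving reparametrisation $\phi\colon J_{(n-1)m}\to J_s$ matching the $n$ waypoints; transposing back gives the combinatorial section $Q_i\to P^{J_{(n-1)m}}$ on the \emph{same} open set $Q_i$, so the count stays $r$. This exponential-law step is the essential mechanism your proposal is missing.
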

(See Theorem \ref{tctp})

 Given a finite space $P$, there is a naturally associated simplicial complex, called  \emph{order complex} $\mathcal{K}(P)$. A finite space is equivalent to a finite poset and the $n$-simplices of $\mathcal{K}(P)$ are linearly ordered subsets of $P$   (c.f \ref{oc}). 
 As noted by Tanaka, $CC(P)$ is just an upper bound to to $SC(\mathcal{K}(P))$.  To describe $SC(\mathcal{K}(P))$ combinatorially, he used barycentric subdivision of $P$ to define $CC^\infty(P)$. Finally it is shown that   $CC^\infty(P) = SC(\mathcal{K}(P))$. Hence $CC^\infty(P) = TC(\mid \mathcal{K}(P) \mid)$.
 
  We further generalise above ideas to \emph{higher combinatorial complexities} $\cckn{\infty}{n}(P)$ using barycentric subdivision  of a finite space $P$ and prove the prove following.

\begin{thmc}
 For  any finite space $P$, we have $\cckn{\infty}{n}(P) = SC_n( \mathcal{K}(P) )$, for any  $n \geq 2$. 
\end{thmc}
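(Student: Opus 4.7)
The plan is to prove the two inequalities $\cckn{\infty}{n}(P) \leq SC_n(\mathcal{K}(P))$ and $SC_n(\mathcal{K}(P)) \leq \cckn{\infty}{n}(P)$ separately, patterned on Tanaka's argument for $n = 2$ and leveraging the machinery already developed in Theorems A and B. The first move is to translate both sides into topological complexity: by Theorem A, $SC_n(\mathcal{K}(P)) = TC_n(|\mathcal{K}(P)|)$, and by Theorem B applied to each iterated barycentric subdivision, $\cckn{}{n}(\sd^k P) = TC_n(\sd^k P)$. The face-poset identification $\sd P \cong \mathcal{X}(\mathcal{K}(P))$ together with McCord's theorem yields weak homotopy equivalences $|\mathcal{K}(\sd^k P)| \cong |\sd^k \mathcal{K}(P)| \simeq |\mathcal{K}(P)|$ for every $k \geq 0$, so the topological invariants on the right-hand side stabilize after sufficiently many subdivisions. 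The remaining content is then a combinatorial matching between simplicial data on $\mathcal{K}(P)$ and order-preserving data on fine enough subdivisions of $P$.

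For the upper bound $\cckn{\infty}{n}(P) \leq SC_n(\mathcal{K}(P))$, I would begin with a simplicial $n$-fold cover of $\mathcal{K}(P)^n$ whose pieces carry contiguity-theoretic sections of the higher evaluation map, as guaranteed by Theorem A. Using the face-poset correspondence and the homeomorphism $|\mathcal{K}(\sd^k P)| \cong |\mathcal{K}(P)|$, I would transport these sections to order-preserving sections over the pieces of $(\sd^k P)^n$ for a depth $k$ chosen uniformly across all pieces of the cover. The resulting data witnesses $\cckn{}{n}(\sd^k P) \leq SC_n(\mathcal{K}(P))$, and hence the bound passes to $\cckn{\infty}{n}(P)$. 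For the reverse inequality, I would start from order-preserving sections on $(\sd^k P)^n$ realizing $\cckn{\infty}{n}(P) = r$, apply the order-complex and geometric-realization functors to obtain continuous sections over $|\mathcal{K}(\sd^k P)|^n \cong |\mathcal{K}(P)|^n$ of the higher path-space fibration, and then use Theorem A to repackage them as a simplicial cover realizing $SC_n(\mathcal{K}(P)) \leq r$.

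The main obstacle I anticipate is controlling iterated simplicial approximation in the $n$-fold product direction. Continuous sections defined over open pieces of $|\mathcal{K}(P)|^n$ must be approximated by simplicial maps on a product triangulation, yet the Cartesian product of simplicial complexes is not itself simplicial without subdivision. Choosing the subdivision depth $k$ uniformly across the $n$ coordinates and across all pieces of the cover, while preserving the higher contiguity structure that plays the role of homotopy-lifting data for the $J_n$-style fibration, is the technical heart of the argument and where I expect the real work to lie. This difficulty already appears in Tanaka's $n = 2$ proof, and the $n$-fold generalization should follow the same pattern but with careful bookkeeping across the $n$ coordinates.
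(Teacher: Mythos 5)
Your proposal has a genuine gap that traces back to a misreading of the definition of $\cckn{k}{n}(P)$. You treat $\cckn{k}{n}(P)$ as if it were $\ccn{n}(\sd^k P)$, so that Theorem~B would give $\ccn{n}(\sd^k P) = TC_n(\sd^k P)$ and you could then appeal to McCord's weak equivalence $|\mathcal{K}(\sd^k P)| \simeq_w \sd^k P$ to "stabilize" to $TC_n(|\mathcal{K}(P)|)$. But $\cckn{k}{n}(P)$ is not $\ccn{n}(\sd^k P)$: its sections $s_i \colon Q_i \to P^{J_{n,m}}$ take values in paths in $P$, not in $\sd^k P$, and the compatibility condition $q_{n,m}\circ s_i = \tau^k_{P^n}$ involves the weak equivalence $\tau^k_{P^n}\colon \sd^k(P^n)\to P^n$. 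This relative structure is exactly what makes $\cckn{k}{n}$ decrease, and erasing it collapses your argument. Worse, the stabilization claim is false on its own terms: $TC_n$ of finite spaces is a homotopy invariant but \emph{not} a weak-homotopy invariant (finite spaces are classified up to homotopy by their Stong cores), so $TC_n(\sd^k P)$ does not converge to $TC_n(|\mathcal{K}(P)|)$ as $k\to\infty$. This is precisely the phenomenon, already visible in the $\mathbb{S}^m$ example, that forces the introduction of the relative invariant $\cckn{\infty}{n}$ rather than $\lim_k \ccn{n}(\sd^k P)$.

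The paper's proof does not route through $TC_n$ at all. It first reformulates $\cckn{\infty}{n}(P)\le r$ via Lemma~\ref{lemma} as the existence of an open cover of some $\sd^k(P^n)$ on which the maps $\rho_1,\dots,\rho_n\colon Q_i\to P$ lie in a common homotopy class of finite-space maps, and then translates this directly into contiguity data on $\sd^k(\mathcal{K}(P^n))$ using the Barmak--Minian correspondences: $\mathcal{K}$ carries homotopic finite-space maps to contiguous simplicial maps, and $\mathcal{X}$ carries contiguous simplicial maps to homotopic maps of face posets, with the naturality of $\tau$ identifying $\tau_P\circ\mathcal{X}(\pi_j)$ with $\rho_j$. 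Your proposal never isolates these ingredients; the "technical heart" you anticipate (approximating continuous sections on product triangulations) is a concern internal to Theorem~A, not the issue here. To repair the argument you would need to (i) state and prove the reformulation Lemma~\ref{lemma}, and (ii) invoke the combinatorial dictionary between finite-space homotopy and simplicial contiguity rather than passing through realizations.
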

(See Theorem \ref{tsc})

The organization of the rest of paper is as follows. In Section 2 we recall basics of topological complexity and higher topological complexity of a space $X$. In Section 3 we  introduce higher simplicial complexity of a simplicial complex $K$ and prove Theorem (A). In Section 4 we define higher combinatorial complexity $\ccn{n}(P)$ of a finite space $P$ and prove Theorem (B). In 5 we describe higher combinatorial complexity with barycentric subdivision $\cckn{\infty}{n}(P)$ of a finite space $P$ and we prove Theorem (C).

\end{section}
\begin{section}{Preliminaries}

In this section we review basic concepts of topological complexity. For details we refer to \cite{farbook, far, rud}. We start by recalling the definition of the \emph{Schwarz genus} of a fibration. Let $p : E \rightarrow B$ be a fibration. The \emph{Schwarz genus} of $p$ is the minimum number $k$ such that $B$ can be cover by $k$ open subsets, $U_1 \cup U_2 \cup \cdots \cup U_k = B$ and on each $U_i$ there is a section of $p$. It is denoted by \emph{genus}$(p)$. If no such $k$ exists then we say $genus(p) =\infty$. Then the topological complexity of $X$ is \emph{genus}$(\pi)$, where $\pi$ is as in the equation \ref{pi}.

 Suppose $I_n, n \geq 2$ denote the wedge of $n$ intervals $[0, 1]_j, j = 1, 2, \cdots, n$, where $0_j \in [0, 1]_j$ are identified. Consider the mapping space $X^{I_n}$ and the fibration
$$e_n : X^{I_n} \rightarrow X^n ,~~~e_n(\alpha) = (\alpha(1_1), \alpha(1_2),\cdots,\alpha(1_n)).$$ 

\noindent The \emph{n-th topological complexity} $TC_n(X)$ of $X$ is defined to be \emph{genus}$(e_n)$. It can be defined alternatively as  $TC_n(X) = \emph{genus}(e'_n)$, where $$e'_n : X^{I}=PX \rightarrow X^n ,  ~~~~e'_n(\alpha) = (\alpha(0), \alpha(\frac{1}{n-1}), \alpha(\frac{2}{n-1}),\cdots,\alpha(1)).$$ 

\noindent Note that $TC_2(X)$ is nothing but $TC(X)$.  It is proved that $TC_n(X)$ is homotopy invariant and $TC_n(X) = 1$ if and only if $X$ is contractible. The sequence $TC_n(X)$ is an increasing sequence i.e, $TC_n(X) \leq TC_{n+1}(X)$. Topological complexity of a space $X$ is closely related to the \emph{Lusternik–Schnirelmann category} (or L-S category) of the space $X$, which is denoted by $cat(X)$. Recall that $cat(X)$ is defined as $cat(X) = \emph{genus}(\pi_0)$, where $\pi_0 : P_0X \rightarrow X$ given by $\pi_0(\gamma) = \gamma(1)$ and $P_0X$ is the space of all  paths in $X$ with a fixed starting point $x_0$. The topological complexity and L-S category of a space satisfy the following inequality:

\begin{center}
$$cat(X) \leq TC_{n}(X) \leq cat(X^n) \leq cat(X)^n.$$
\end{center} 

We will use the following Lemma to define higher simplicial complexity. It is simple generalisation of \cite[Lemma  1.1 ]{gon}

\begin{lemma}\label{lemm}

The evaluation map $e'_n : PX \rightarrow X^n$ admits a section on a subset $A$ of $X^n$ if and only if the composition maps $\pi_1, \pi_2,\cdots\pi_n : A \hookrightarrow X^n \rightarrow X$ are in same homotopy class of maps.

\end{lemma}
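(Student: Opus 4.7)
The plan is to prove both implications directly, mimicking the argument for the $n=2$ case from \cite[Lemma 1.1]{gon}, with the only new ingredient being the bookkeeping for concatenating $n-1$ successive homotopies instead of one.

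For the forward implication, I would assume a continuous section $s : A \to PX$ of $e'_n$ is given, so that for every $a \in A$ the path $s(a)$ satisfies $s(a)(i/(n-1)) = \pi_{i+1}(a)$ for $i = 0,1,\dots,n-1$. Then for each $i = 1, \dots, n-1$ the map
\[
H_i : A \times I \to X, \qquad H_i(a, t) = s(a)\!\left(\tfrac{i-1+t}{n-1}\right),
\]
is continuous (by adjointness of the compact-open topology) and provides a homotopy from $\pi_i|_A$ to $\pi_{i+1}|_A$. Hence all of $\pi_1,\dots,\pi_n$ lie in a single homotopy class.

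For the reverse implication, suppose $H_i : A \times I \to X$ is a homotopy from $\pi_i|_A$ to $\pi_{i+1}|_A$ for each $i=1,\dots,n-1$. I would build a section $s : A \to PX$ piecewise by concatenating these homotopies along the partition $0 < \tfrac{1}{n-1} < \tfrac{2}{n-1} < \dots < 1$: for $t \in [(i-1)/(n-1),\, i/(n-1)]$ set
\[
s(a)(t) = H_i\bigl(a,\, (n-1)t - (i-1)\bigr).
\]
At each junction $t = i/(n-1)$ the two defining formulas agree, since $H_i(a,1) = \pi_{i+1}(a) = H_{i+1}(a,0)$, so $s(a)$ is a well-defined path in $X$. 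Continuity of $s : A \to X^I$ follows by the pasting lemma together with the standard adjunction, and by construction $e'_n(s(a)) = (\pi_1(a), \pi_2(a), \dots, \pi_n(a)) = a$, so $s$ is the desired section.

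The only potentially delicate point is the continuity of the concatenated map $s$ with respect to the compact-open topology on $PX$, but this is routine: the adjoint $\widetilde{s} : A \times I \to X$ is continuous by pasting the continuous pieces $H_i$ along the closed subsets $A \times [(i-1)/(n-1), i/(n-1)]$, and continuity of $s$ itself then follows from the exponential law since $I$ is locally compact Hausdorff. Hence no genuine obstacle arises, and the proof is essentially a careful reindexing of Gonzalez's $n=2$ argument.
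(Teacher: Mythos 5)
Your proof is correct and follows essentially the same approach as the paper: in the forward direction you read off the $n-1$ homotopies from the adjoint of the section by restricting to the subintervals $[\tfrac{i-1}{n-1},\tfrac{i}{n-1}]$, and in the reverse direction you concatenate the given homotopies to build the section. The only difference is that you spell out the reparametrizations and the continuity check (pasting lemma plus exponential law) more explicitly than the paper does.
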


\begin{proof}

Let $e'_n$ admits a section $s : A \rightarrow PX$ on a subset $A$ of $X^n$. Let $H : A \times I \rightarrow X$ be the map given by $H(x_1, x_2, \cdots, x_n, t) = s(x_1, x_2, \cdots x_n)(t)$. Then clearly $H|_{A\times [\frac{j-1}{n-1}, \frac{j}{n-1}]} : \pi_j \simeq \pi_{j+1}$ for $j=1,\cdots, n-1$. Conversely, assume that $\pi_1, \pi_2, \cdots \pi_n$ are in same homotopy class and $h^j_t : A \rightarrow X$ be a homotopy from $\pi_j$ to $\pi_{j+1}$ i.e, $h^j_0(x_1, x_2, \cdots, x_n) = x_j$ and $h^j_1(x_1, x_2, \cdots, x_n) = x_{j + 1}$ for $j = 1, 2, \cdots {n-1}$. Then the concatenation of the paths $h^1_t, h^2_t, \cdots, h^{n-1}_t$ will give a section on $A$.
\end{proof}

 We will use the following Proposition to relate $n$-th simplicial complexity of a simplicial complex $K$ and $n$-th topological complexity of the geometric realization of $K$. This is a simple generalisation of the result of \cite[Proposition 4.12 and Remark 4.13]{farbook}

\begin{prop}\label{pro}
Let $X$ be an ENR. Then $TC_n(X) = r$, where $r$ is the minimal integer such that there exist a section $s : X^n \rightarrow PX$ (which is not necessarily continuous) of the fibration $e'_n$ and a splitting $ G_1 \cup G_2 \cup \cdots \cup G_r = X^n$, where each $G_i$ is locally compact subset of $X^n$ and each restriction $s_{|G_i} : G_i \rightarrow PX$ is continuous for $i = 1, 2,\cdots,r$.
\end{prop}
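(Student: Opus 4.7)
The plan is to generalize Farber's argument for Proposition 4.12 of \cite{farbook} in the obvious way, replacing the fibration $\pi : PX \to X \times X$ with the higher analog $e'_n : PX \to X^n$. Since a finite product of ENRs is an ENR, $X^n$ is an ENR, and this is essentially all that is used. I would split the claim into two inequalities: that the minimal $r$ appearing in the statement is both $\leq TC_n(X)$ and $\geq TC_n(X)$.

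For the inequality $r \leq TC_n(X)$, suppose $TC_n(X) = k$, so that there is an open cover $U_1 \cup \cdots \cup U_k = X^n$ and continuous sections $\sigma_i : U_i \to PX$ of $e'_n$. Set $G_1 = U_1$ and $G_i = U_i \setminus (U_1 \cup \cdots \cup U_{i-1})$ for $i \geq 2$. The $G_i$'s are pairwise disjoint, cover $X^n$, and each is locally closed in $X^n$ (intersection of an open and a closed set), hence locally compact since $X^n$ is locally compact Hausdorff. Defining $s(x) = \sigma_i(x)$ for $x \in G_i$ gives a (possibly discontinuous) section of $e'_n$ with each restriction $s|_{G_i}$ continuous, as required.

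For the reverse inequality $TC_n(X) \leq r$, suppose $X^n = G_1 \cup \cdots \cup G_r$ is a splitting into locally compact subsets with each $s|_{G_i}$ continuous. The key input is the following extension principle: if $G$ is a locally compact subset of an ENR $Y$ and $\sigma : G \to E$ is a continuous section of a fibration $p : E \to Y$, then there exist an open neighborhood $V$ of $G$ in $Y$ and a continuous section $\tilde{\sigma} : V \to E$ of $p$. Applying this with $Y = X^n$ and $p = e'_n$ to each pair $(G_i, s|_{G_i})$ produces an open cover $V_1, \ldots, V_r$ of $X^n$ equipped with continuous sections, which by definition gives $TC_n(X) \leq r$.

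The main obstacle, and the only substantive ingredient, is the extension principle above. The argument proceeds as in \cite{farbook}: using the ENR structure one embeds $Y$ as a retract of an open set $W$ of some Euclidean space; the locally compact $G$ is open in $\overline{G}$, so one can find an open $V_0 \subseteq Y$ with $G$ closed in $V_0$; and finally the homotopy lifting property of $p$, combined with the retraction, lets one extend $\sigma$ continuously from $G$ to a neighborhood. Since this argument is completely insensitive to whether the target of the fibration is $X \times X$ or $X^n$, the generalization to $e'_n$ is immediate once one knows $X^n$ is an ENR.
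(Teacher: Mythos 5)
Your proposal is correct and follows essentially the same route as the paper: both directions are handled identically, with $G_1 = U_1$ and $G_i = U_i \setminus (U_1 \cup \cdots \cup U_{i-1})$ for the easy inequality, and an appeal to Farber's Proposition 4.12(c) extension argument for the hard one. The only cosmetic difference is that you phrase the extension step as a general principle about extending continuous sections of a fibration from a locally compact subset of an ENR to an open neighborhood, whereas the paper first translates sections of $e'_n$ over $G_i$ into homotopies between the $n$ coordinate projections restricted to $G_i$ and then extends those homotopies; both unpack to the same argument from \cite[Proposition 4.12]{farbook}.
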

\begin{proof}
Let us assume that $TC_n(X) = k$. Consider an open cover $\{U_i\}_{i=1}^k$ of $X^n$ such that on each open set there is a continuous section $s_i : U_i \rightarrow PX$ of $e'_n$. Set $G_1 = U_1$ and $G_i = U_i - ( U_1 \cup U_2 \cup \cdots \cup U_{i-1} )$ for $i \geq 2$. Then $G_i$'s are locally compact and cover $X^n$. Let $s = s_1 \cup s_{2|_{G_2}} \cup s_{3|_{G_3}} \cdots \cup s_{k|_{G_k}}$. Then $s : X^n \rightarrow PX$ is a section of $e'_n$ which is continuous on each $G_i$. Thus $r \leq k = TC_n(X)$. 

Conversely, suppose that $G_1 \cup G_2 \cup \cdots \cup G_{r} = X^n$  where $\{G_i\}$  are locally compact subsets. Assume that $s : X^n \rightarrow PX$ is a section (which is not necessarily continuous) of the fibration $e'_n$ such that $s|_{G_i} = s_i$ are continuous. These $s_i$'s are in one-to-one correspondence with homotopies $h^i_t : G_i \rightarrow X$, $t \in [0, 1]$ such that $h^i_0, h^i_{\frac{1}{n-1}}, h^i_{\frac{2}{n-1}}, \cdots, h^i_1 : G_i \rightarrow X$ are the $n$ projections $X^n \to X$ restricted to $G_i$. Using the same argument as  in \cite[Proposition 4.12(c)]{farbook}, we can extend the section $s_i$ on an open subset containing $G_i$. Hence $TC_n(X) \leq r.$

\end{proof}
\end{section}

\begin{section}{Higher simplicial complexity}

 A simplicial approach to topological complexity was introduced by Gonzalez's (\cite{gon}). He introduced the notion of simplicial complexity $SC(K)$ for simplicial complex $K$. This was based on 
contiguity class of simplicial maps. It is proved in (\cite{gon}) that simplicial complexity $SC(K)$  is equal to the topological complexity $TC( \mid K \mid )$ of geometric realization of $K$, for a finite simplicial complex $K$. In this section we introduce higher analog of simplicial complexity $SC_{n}(K)$ and prove that for a finite simplicial complex $K$, $SC_{n}(K) = TC_{n}( \mid K \mid )$.

 We begin by recalling the definition and some basic facts about contiguity of simplicial maps. 
For a positive integer $c$, two simplicial maps $\phi , \phi' : K \rightarrow L$ are called $c$-\emph{contiguous} if there is a sequence of simplicial maps $\phi = \phi_0, \phi_1, \phi_2 \cdots, \phi_c = \phi' : K \rightarrow L$, such that $\phi_{i-1}(\sigma) \cup \phi_{i}(\sigma)$ is a simplex of $L$ for each simplex $\sigma$ of $K$ and $i \in \{1, 2,\cdots,c\}$.  We write $\phi \sim \phi'$ if  $\phi$ and $\phi'$ are $c$-contiguous for some positive integer $c$. This defines an equivalence relation on the set of simplicial maps $K \rightarrow L$ and the equivalence classes are called \emph{contiguity classes}. Simplicial maps in the same contiguity class have homotopic topological realization.
  
 We denote barycentric subdivision of $K$ by $\sd(K)$ and $\sd^{k+1}(K)= \sd(\sd^k(K)).$  We choose a \emph{simplicial approximation} of the identity on $\mid K \mid \times \mid K \mid,$  $$\iota_{\sd^{k-1}(K \times K)} : \sd^{k}(K \times K) \rightarrow \sd^{k-1}( K \times K),~~ k \geq 1.$$  The iterated compositions are denoted by $$\iota^{k}_{K \times K} : \sd^{k}(K \times K) \rightarrow  K \times K,~~ k \geq 0.$$ 
Let $\pi_{j} : \sd^{k}(K \times K) \rightarrow K$ denote the composition of $\iota^{k}_{K \times K} $ and $j^{th}$ projection for $j = 1, 2$.

\noindent Let $K$ be a finite simplicial complex. In \cite{gon} Gonzalez defined $SC^{k}(K)$ to be the smallest nonnegative integer $r$ such that there exist subcomplexes $\{L_i\}_{i=1}^{r}$ covering $\sd^{k}(K \times K)$ and the restrictions $\pi_1 , \pi_2 : L_i \rightarrow K$ lie in the same contiguity class for each $i$. 
Then  $SC^{0}\geq SC^{1} \geq SC^{2} \geq \cdots \geq 1$.
The \emph{simplicial complexity} $SC(K)$ is defined as the minimum of the $SC^{k}(K)$ i.e.,

$$SC(K) = \lim_{k \to \infty} SC^{k}(K) = \min_{k \geq 0}\{SC^{k}(K)\}.$$

 Now we introduce \emph{higher simplicial complexity} $SC_n(K)$ of simplicial complex $K$. As the previous case we choose a simplicial approximation $\iota_{\sd^{k-1}( K^n)} : \sd^{k}(K^n) \rightarrow \sd^{k-1}( K^n)$ of the identity on $\mid K \mid ^n$ for $k \geq 0$. We denote iterated compositions by $\iota^{k}_{K^n} : \sd^{k}(K^n) \rightarrow  K^n $ and $\pi_{j} : \sd^{k}(K^n) \rightarrow K$ denote the composition of $\iota^{k}_{K^n}$ and $j^{th}$ projection $K^n \to K$ for $j = 1, 2,\cdots,n$.
 
\begin{definition}
Let $K$ be a simplicial complex. We define $SC^{k}_n(K)$ as the smallest nonnegative integer $r$ such that there exist subcomplexes $\{L_i\}_{i=1}^{r}$ covering $\sd^{k}(K^n)$ and the restrictions $\pi_j : L_i \rightarrow K$, for $j = 1, 2,\cdots,n$ lie in the same contiguity class, for each $i$. If no such $r$ exists then we define $SC^{k}_n(K)$ to be $\infty$.
\end{definition}

\noindent It is to be noted that the value $SC^{k}_{n}$ is independent of the chosen approximation $\iota^{k}_{K^n} : \sd^{k}(K^n) \rightarrow  K^n$ of identity on $\mid K \mid^n$.  As in the simplicial complexity of Gonzales,  we have $SC_n^{0}\geq SC_n^{1} \geq SC_n^{2} \geq \cdots \geq 1$ for any simplicial complex $K$. Using the Proposition (\ref{pro}) we deduce the following:

\begin{lemma}\label{lem}
For a simplicial complex $K$, we have the following inequalities: 

\begin{enumerate}[(i)]
\item  $SC_n^k(K) \geq TC_{n}(\mid K \mid)$ for any $n \geq 2$.
\item $SC_n^k(K) \geq SC_n^{k+1}(K)$ for all $n\geq 2,~~ k \geq 0$.
\end{enumerate}
\end{lemma}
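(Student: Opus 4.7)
The plan is to handle the two inequalities separately, both leveraging the standard bridge between contiguity of simplicial maps and homotopy of their topological realizations.

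For (i), I would start from an $SC_n^k(K)$-witnessing cover $\{L_i\}_{i=1}^{r}$ of $\sd^k(K^n)$ by subcomplexes on which the $n$ projections $\pi_j : L_i \to K$ lie in a common contiguity class. Since contiguous simplicial maps have homotopic geometric realizations, and the identification $|\sd^k(K^n)| = |K|^n$ turns $|\iota^k_{K^n}|$ into a self-map of $|K|^n$ homotopic to the identity, the topological projections $\mathrm{pr}_j|_{|L_i|} : |L_i| \to |K|$ are mutually homotopic. Lemma~\ref{lemm} then yields a continuous section of $e'_n$ over each $|L_i|$. Each $|L_i|$ is closed in $|K|^n$, hence locally compact, and the $|L_i|$ cover $|K|^n$. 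Setting $G_1 = |L_1|$ and $G_i = |L_i| \setminus \bigcup_{j<i} |L_j|$ gives a locally compact decomposition on which the chosen local sections paste into a global (a priori discontinuous) section of $e'_n$ which is continuous on each $G_i$. Proposition~\ref{pro} then delivers $TC_n(|K|) \le r$.

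For (ii), the idea is to subdivide the witnessing cover. Given $\{L_i\}_{i=1}^{r}$ as above, set $L'_i := \sd(L_i) \subseteq \sd^{k+1}(K^n)$. These are subcomplexes of $\sd^{k+1}(K^n)$ that still cover it. The projections at level $k{+}1$ factor via $\iota^{k+1}_{K^n} = \iota^{k}_{K^n}\circ \iota_{\sd^k(K^n)}$, so
\[
  \pi_j\big|_{L'_i} \;=\; \bigl(\pi_j\big|_{L_i}\bigr)\circ\bigl(\iota_{\sd^k(K^n)}\big|_{\sd(L_i)}\bigr),
\]
where the right-hand factor is a simplicial map $\sd(L_i) \to L_i$ (a simplicial approximation of the identity sends the barycenter of a simplex $\sigma$ to a vertex of $\sigma$, so it preserves subcomplexes). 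Contiguity of simplicial maps is preserved under pre-composition with any simplicial map: if $\phi_0, \phi_1, \dots, \phi_c$ is a contiguity sequence from $\pi_j|_{L_i}$ to $\pi_{j+1}|_{L_i}$, then $\phi_0\circ\iota, \dots, \phi_c\circ\iota$ is a contiguity sequence between $\pi_j|_{L'_i}$ and $\pi_{j+1}|_{L'_i}$. Hence the $n$ restrictions $\pi_j|_{L'_i}$ lie in a single contiguity class, so $SC_n^{k+1}(K) \le r = SC_n^k(K)$.

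The main obstacle I anticipate is purely bookkeeping around $\iota_{\sd^k(K^n)}$: for (i) confirming that composition with $|\iota^k_{K^n}|$ indeed identifies the topological realizations of the simplicial projections with the standard coordinate projections of $|K|^n$ restricted to $|L_i|$, and for (ii) verifying that $\iota_{\sd^k(K^n)}$ restricts to a simplicial map $\sd(L_i) \to L_i$ and that contiguity is stable under right-composition. Both points are mild; the substance of the lemma is essentially contained in the contiguity/homotopy correspondence together with Proposition~\ref{pro}.
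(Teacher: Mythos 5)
Your proof is correct and follows essentially the same route as the paper. For part (i) you proceed exactly as the paper does: pass from contiguity to homotopy of realizations, invoke Lemma~\ref{lemm} for sections over $|L_i|$, disjointify into locally compact pieces $G_i$, and feed the result into Proposition~\ref{pro}; your explicit remark that $|\iota^k_{K^n}|$ is homotopic to the identity, so that $|\pi_j|$ is homotopic to the $j$-th coordinate projection restricted to $|L_i|$, spells out a step the paper leaves implicit. For part (ii) the paper introduces a separately chosen approximation $\lambda_J : \sd(J) \to J$ and observes that the square relating it to $\iota : \sd^{k+1}(K^n) \to \sd^k(K^n)$ commutes up to contiguity, whereas you simply take the restriction $\iota_{\sd^k(K^n)}|_{\sd(L_i)} : \sd(L_i) \to L_i$ (using that any approximation of the identity sends the barycenter of $\sigma$ to a vertex of $\sigma$, hence preserves subcomplexes) and get an exact factorization of $\pi_j|_{\sd(L_i)}$; both then conclude by the stability of contiguity under pre-composition. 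The two arguments are the same in substance, yours being a mild streamlining of the paper's.
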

 
 \begin{proof}
(i) If we first apply the geometric realization functor of simplicial complexes, then $\mid \pi_j \mid$ are homotopic, for $ j = 1, 2, \cdots, n$.  Then use Lemma \ref{lemm} to get sections $s_i : \mid L_i \mid \rightarrow P\mid K \mid$ over locally compact susbets $\mid L_i \mid$. If we set $G_1 = \mid L_1 \mid$ and $G_i = \mid L_i \mid - (\mid L_1 \mid \cup \mid L_2 \mid \cup \cdots \cup \mid L_{i-1} \mid )$ for $i \geq 2$, then each $G_i$ is also locally compact. Define $s : \mid K \mid ^n \rightarrow  P\mid K \mid$, is the union of $s_i$ restricted over $G_i$ and lastly we apply Proposition \ref{pro} to conclude $SC_n^k(K) \geq TC_{n}(\mid K \mid)$ for any $n \geq 2$. 

 (ii) Let $J$ be a subcomplex of $\sd^k(K^n)$ on which $\pi_1, \pi_2,\cdots,\pi_n$ are in same contiguity class. Assume that $\lambda_J : \sd(J) \rightarrow J$ is an approximation of identity on $J$. Obviously $\sd(J)$ is a subcomplex of $\sd^{k+1}(K^n)$. We will show that  $\pi_1, \pi_2,\cdots,\pi_n$ are in same contiguity class on $\sd(J)$. 
The two compositions of the diagram are contiguous.

\begin{center}
$\begin{tikzcd}
J\arrow[hookrightarrow]{r} & \sd^k(K^n)
 \\
  \sd(J)\arrow{u}{\lambda}\arrow[hookrightarrow]{r} & \sd^{k+1}{K^n}\arrow{u}{\iota}
\end{tikzcd}$
\end{center}
So, $\pi_1, \pi_2,\cdots,\pi_n$ are in same contiguity class on $\sd(J)$.
 \end{proof}
 
 Since $\{SC_n^{k}(X)\}_{k=1}^{\infty}$ is a decreasing sequence of integers, we can make the following definition. 
\begin{definition}
For a finite simplicial complex $K$, the \emph{$n$-th simplicial complexity} $SC_n(K)$ is defined as the minimum of the $SC_n^{k}(K)$:

$$SC_n(K) = \lim_{k \to \infty} SC_n^{k}(K) = \min_{k \geq 0}\{SC_n^{k}(K)\}.$$

\end{definition}

We now prove the main Theorem of this section. The proof is analogous to \cite[Theorem 3.5]{gon}. The following result is required in the proof (\cite[Chapter 3]{span}). 
 
 \begin{prop} \label{prop}
 Let $f_1, f_2,\cdots,f_n : \mid K \mid \rightarrow \mid L \mid$ be continuous maps, which are in same homotopic class, then there is $k_0 \in \mathbb{N}$ such that, for each $k \geq k_0$ and any approximation $\phi_1, \phi_2,\cdots,\phi_n : \sd^k(K) \rightarrow L$ of  $f_1, f_2,\cdots,f_n$ respectively, are belongs to same contiguity class.

\end{prop}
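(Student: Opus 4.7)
The plan is to reduce this $n$-map statement to the classical pairwise version of the simplicial approximation theorem for homotopic maps (as found in Spanier, Chapter 3), and then conclude by transitivity of the contiguity relation. Recall that, as noted in the excerpt just above the proposition, $c$-contiguity generates an equivalence relation $\sim$ on simplicial maps $K \to L$, so ``lying in the same contiguity class'' is transitive.

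First I would invoke the standard fact (Spanier, Chapter 3) in the form that is actually needed: if $f, g : |K| \to |L|$ are homotopic, then there exists $k_{f,g} \in \mathbb{N}$ such that, for all $k \geq k_{f,g}$, any simplicial approximations $\phi : \sd^{k}(K) \to L$ of $f$ and $\psi : \sd^{k}(K) \to L$ of $g$ lie in the same contiguity class. This is obtained from the classical existence statement (there exist contiguous approximations after sufficient subdivision) combined with the classical uniqueness statement (any two simplicial approximations of the same continuous map on a common domain are contiguous). Concretely: pick contiguous approximations $\phi', \psi'$ given by the existence statement; then $\phi \sim \phi' \sim \psi' \sim \psi$.

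Next, apply this pairwise fact to each consecutive pair $(f_i, f_{i+1})$, for $i = 1, 2, \ldots, n-1$, which are homotopic by hypothesis. This produces integers $k_1, k_2, \ldots, k_{n-1}$, and I set
\[
k_0 := \max\{k_1, k_2, \ldots, k_{n-1}\}.
\]
For any $k \geq k_0$ and any simplicial approximations $\phi_1, \phi_2, \ldots, \phi_n : \sd^{k}(K) \to L$ of $f_1, f_2, \ldots, f_n$, each consecutive pair $\phi_i, \phi_{i+1}$ lies in a common contiguity class by the previous step; by transitivity of $\sim$, all of $\phi_1, \ldots, \phi_n$ lie in the same contiguity class, as required.

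There is no genuine obstacle: the real content of the proposition is the classical pairwise simplicial approximation theorem for homotopic maps, and the extension from two maps to $n$ maps is a purely formal consequence of the fact that $\sim$ is an equivalence relation. The only point to handle with a little care is the strengthening from ``there exist contiguous simplicial approximations'' to ``all simplicial approximations are in the same contiguity class,'' which is handled by the uniqueness-up-to-contiguity of simplicial approximations of a single continuous map.
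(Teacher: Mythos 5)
The paper offers no proof of Proposition~\ref{prop}; it is simply cited from Spanier, Chapter~3, and then used as a black box in the proof of Theorem~\ref{tst}. Your argument is therefore a supplement rather than an alternative, and it is correct. Reducing the $n$-map statement to the pairwise case via transitivity of the contiguity relation, with $k_0$ the maximum of the pairwise thresholds, is exactly the intended formal reduction, and your observation that the ``any approximations'' form follows from the ``some contiguous approximations exist'' form together with uniqueness of simplicial approximations up to contiguity on a common domain is the right technical refinement and matches the $n=2$ version used by Gonz\'alez.

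The one step you leave tacit is why a single threshold $k_{f,g}$ works for \emph{every} $k \geq k_{f,g}$, rather than only for the particular $k'$ furnished by the existence theorem. To get this you need two routine facts: that precomposing a simplicial approximation of $f$ with a simplicial approximation of the identity on $|K|$ again yields a simplicial approximation of $f$, and that precomposition by a simplicial map preserves contiguity classes. Both are immediate from the star-containment characterization, and once they are noted, the argument ``push the contiguous pair $\phi',\psi'$ forward to $\sd^k(K)$, then compare to $\phi,\psi$ by uniqueness'' goes through and your proof is complete.
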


\begin{theorem}\label{tst}
For a finite simplicial complex $K$,  $SC_n(K) = TC_{n}(\mid K\mid)$ for any $n \geq 2$.
\end{theorem}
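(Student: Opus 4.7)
The plan is to prove both inequalities. Lemma \ref{lem}(i) already yields $SC_n^k(K) \geq TC_n(|K|)$ for every $k$, so passing to the minimum gives $SC_n(K) \geq TC_n(|K|)$. For the reverse inequality, set $r = TC_n(|K|)$ and fix an open cover $\{U_i\}_{i=1}^r$ of $|K|^n$ on which $e'_n$ admits continuous sections; Lemma \ref{lemm} then says that on each $U_i$ the $n$ coordinate projections $p_j \colon U_i \to |K|$ lie in a single homotopy class.

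Since $K^n$ is finite, $|K|^n$ is compact, and the Lebesgue-number lemma applied to $\{U_i\}$ yields an integer $k_1$ such that every closed simplex of $\sd^{k_1}(K^n)$ lies inside some $U_i$. For each $i$, let $L_i$ be the subcomplex of $\sd^{k_1}(K^n)$ consisting of all simplices whose closure is contained in $U_i$; closures of faces stay inside the same $U_i$, so $L_i$ really is a subcomplex, and the Lebesgue property ensures $\bigcup_{i=1}^r L_i = \sd^{k_1}(K^n)$. Because $|L_i| \subset U_i$, the restrictions $p_j|_{|L_i|}$ remain pairwise homotopic for each $i$.

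To finish, I would apply Proposition \ref{prop} to the finitely many complexes $L_i$: this produces a single integer $k_0$ such that, for every $i$, any simplicial approximations $\sd^{k_0}(L_i) \to K$ of the maps $p_j|_{|L_i|}$ (for $j = 1, \ldots, n$) lie in a common contiguity class. Set $k = k_1 + k_0$ and $L_i' = \sd^{k_0}(L_i)$, viewed as a subcomplex of $\sd^k(K^n)$. Since subdivision commutes with unions of subcomplexes, the $L_i'$ still cover $\sd^k(K^n)$, and the restrictions $\pi_j|_{L_i'}$ are simplicial approximations of $p_j|_{|L_i|}$; hence Proposition \ref{prop} forces $\pi_1|_{L_i'}, \ldots, \pi_n|_{L_i'}$ into a single contiguity class. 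This gives $SC_n^k(K) \leq r$ and therefore $SC_n(K) \leq r = TC_n(|K|)$. The step I expect to require the most care is the verification that the canonical maps $\pi_j|_{L_i'}$ really do qualify as simplicial approximations of $p_j|_{|L_i|}$ so that Proposition \ref{prop} applies; this follows from the construction of $\iota^k_{K^n}$ as a simplicial approximation of the identity, together with the stability of the carrier condition under postcomposition with $p_j$, but it has to be checked uniformly across the finite family $\{L_i\}$.
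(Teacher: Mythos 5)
Your proposal is correct and follows essentially the same route as the paper's proof: Lemma \ref{lem}(i) for one inequality, and for the other, the Lebesgue-number argument to get subcomplexes $L_i$ of $\sd^{k_1}(K^n)$ covering $\sd^{k_1}(K^n)$ with realizations inside the $U_i$, then Proposition \ref{prop} to subdivide further so that the restricted projections fall into a common contiguity class. You are merely more explicit than the paper about the compactness/Lebesgue step, about taking the maximum of the finitely many $k_0^{(i)}$, and about the check that $\pi_j|_{L_i'}$ genuinely are simplicial approximations of $p_j|_{|L_i|}$ — all points the paper leaves implicit.
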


\begin{proof}
From the Lemma (\ref{lem}) one can say that $ SC_n(K) \geq TC_{n}(\mid K \mid)$. We now prove  the other inequality.  Assume that $TC_n(\mid K \mid) = r$. We choose a motion planner $\{(U_1, s_1), (U_2, s_2),\cdots,(U_r, s_r)\}$ for $\mid K \mid$. Using the finiteness assumption on $K$, we choose a large positive integer $k$ so that the realization of each simplex of $\sd^k(K^n)$ is contained in some $U_i ~ (0 \leq i \leq r)$. For each $i \in \{1, 2,\cdots,r\}$ let $L_i$ be the subcomplex of $\sd^k(K^n)$ consisting of those simplices whose realization are contained in $U_i$. Then $L_1, L_2,\cdots,L_r$ covers $K^n$. Now the projections $p_{1}, p_{2},\cdots ,p_{n} : \mid K \mid ^{n} \rightarrow \mid K \mid $ belong to same homotopy class over each $U_i$ and, in particular, over the realization of the corresponding subcomplex $L_i$. Therefore by Proposition (\ref{prop}) there is a positive integer $k'$ such that, for each $i \in \{1, 2,\cdots,r\}$ the compositions $\sd^{k'}(L_i) \hookrightarrow \sd^{k+k'}(K^n) \rightarrow K$ are belongs to same contiguity class. Hence $SC_n(K) \leq TC_n(\mid K \mid)$, and thus $SC_n(K) = TC_{n}(\mid K\mid)$.
\end{proof}
\end{section}

\begin{section}{Higher combinatorial complexity}

 In this section we introduce the higher analogue of combinatorial complexity of a finite space, as defined by Tanaka in \cite{tana}.  We refer reader to \cite{sto} for finite spaces. A \emph{finite space} $P$ is a finite $T_0$ space. For any $x \in P$ we denote $U_x$ be the intersection of all open set  containing $x$. Now define a partial relation on $P$ by $x \leq y$ if and only if $U_x \subseteq U_y$. So we can consider $P$ as a poset. On the other hand, given a finite  poset, we have a $T_0$ finite set with $U_x = \{y: ~~y\leq x\}$. Thus a finite space is equivalent to a finite poset.  From now onwards we assume all our finite spaces are connected. 
 
 A map between finite spaces is continuous if and only if it preserves the partial order.  Let $J_m$ denote the finite space consisting $m+1$ points with the zigzag order 
\begin{center}
$0 \leq 1 \geq 2 \leq \cdots \geq (\leq)m$.
\end{center}
This finite space is called the \emph{finite fence} with length $m$. It behaves like an interval in the category of finite spaces. An order preserving map $\gamma : J_m \rightarrow P$ is called a combinatorial path or simply a path. Thus a combinatorial path is just a zigzag $\gamma(0) \leq \gamma(1) \geq \gamma(2) \leq \cdots \geq (\leq) \gamma(m)$ of elements of $P$.
  A conneted finite space is always path connected. If $\gamma_1$ and $\gamma_2$ be two combinatorial paths in $P$ of length $m_1$ and $m_2$ respectively, satisfying $\gamma_1 (m_1) = \gamma_2 (0)$. Then the concatenation of $\gamma_1$ and $\gamma_2$ is a path $\gamma_1\ast\gamma_2: J_m \to P$ where $m = m_1 + m_2$ or $m_1 + m_2 + 1$ according to $m_1$ is even or odd. It is define as: 
  
\begin{center}
$\gamma_1*\gamma_2 (i) = \begin{cases}
\gamma_1 (i) & \mbox{ if } ~~ 0 \leq i \leq m_1\\
\gamma_2 (i-m_1) & \ \mbox{if } ~~ m_1 \leq i \leq m_1+ m_2 .
 \end{cases}$
 \hspace{1.7 cm} $(m_1$ -even)
 \end{center}

 \begin{center}
$\gamma_1*\gamma_2 (i) = \begin{cases}
\gamma_1 (i) & \mbox{ if } ~~0\leq  i \leq m_1\\
\gamma_1 (i-1) & \mbox{ if } ~~ i = m_1 + 1\\
\gamma_2 (i-m_1-1) & \ \mbox{if } ~~m_1+1\leq  i \leq m_1 +m_2+ 1 .
 \end{cases}$
 $(m_1$ -odd)
\end{center} 

\noindent Two maps $f, g : P \rightarrow Q$ between two finite spaces are called homotopic if there exist $m \geq 0$ and a continuous map (or an order preserving map) $H : P \times J_m \rightarrow Q$ such that $H(x, 0) = f(x)$ and $H(x, m) = g(x)$.  The finite space of all combinatorial paths of $P$ with length $m$, equipped with the pointwise order, is denoted by $P^{J_m}$. As an analog of path fibration, it is equipped with the canonical order preserving map 
$$q_m: P^{J_m} \rightarrow P \times P \text{ defined by } q_m(\gamma) = (\gamma (0), \gamma (m)), ~~m \geq 0$$ 
 
 \noindent In (\cite{tana}) Tanaka defined $CC_{m}(P)$   to be the smallest nonnegative integer $r$ such that there exist an open cover of $P \times P$ consisting $r$ open sets with a section  of $q_m$ on each open set. He proved that $CC_{m}(P)$ is decreasing sequence on $m$ and defined $CC(P)$ be the limit of $CC_{m}(P)$. Also he proved that $CC(P) = TC(P)$. In this section we will generalise the above idea.
 
 Let $n\geq 2$ and $J_{n,m}$ be the finite set of $nm + 1$ points $$\{0, 1_1, 1_2,\cdots ,1_n, 2_1, 2_2,\cdots ,2_n,\cdots ,m_1, m_2,\cdots,m_n\}.$$  The partial ordering on $J_{n,m}$  consists of $n$ finite fances each length $m$ as below:

 $$0 \leq 1_1 \geq 2_1 \leq \cdots \geq (\leq) m_1,$$
 
 $$0 \leq 1_2 \geq 2_2 \leq \cdots \geq (\leq) m_2,$$
 
  $$\cdots $$
  
  $$0 \leq 1_n \geq 2_n \leq \cdots \geq (\leq) m_n.$$

\noindent Consider the space $P^{J{n,m}}$ is the space of all order preserving map $J_{n,m} \rightarrow P$. We have a canonical projection $$q_{n, m} : P^{J_{n, m}} \rightarrow P^n, ~~q_{n,m}(\gamma) = (\gamma (m_1), \gamma(m_2),\cdots , \gamma (m_n)). $$ We define $\ccnm{n}{m}(P)$ as the smallest positive integer $r$ such that $P^n$ can be cover by $r$ open sets $\{Q_i\}_{i=1}^r$ with section $s_i : Q_i \rightarrow P^{J_{n, m}}$ of $q_{n, m}$ for each $i$. If no such $r$ exist then we set $\ccnm{n}{m}(P) = \infty$.  The following lemma shows that $\ccnm{n}{m}$  decreases as we increase $m$.  

 \begin{lemma}\label{ccnm}
 For any $m \geq 0, n\geq 2$, it holds that $\ccnm{n}{m+1}(P) \leq \ccnm{n}{m}(P)$.
 \end{lemma}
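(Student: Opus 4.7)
The plan is to exhibit a canonical continuous ``stabilization'' map $\epsilon_m : P^{J_{n,m}} \to P^{J_{n,m+1}}$ that fits over the projections to $P^n$, and then use it to convert any partial motion planner of length $m$ into one of length $m+1$ on the same open sets.

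First I would define $\epsilon_m$ by extending a combinatorial path with $n$ branches of length $m$ trivially at each of the $n$ tips: given $\gamma : J_{n,m} \to P$, set $\epsilon_m(\gamma)(0) = \gamma(0)$, $\epsilon_m(\gamma)(i_j) = \gamma(i_j)$ for $1 \leq i \leq m$ and each $j$, and $\epsilon_m(\gamma)((m+1)_j) = \gamma(m_j)$. The only nontrivial thing to check is that $\epsilon_m(\gamma)$ is order-preserving at the new relation between $m_j$ and $(m+1)_j$ in $J_{n,m+1}$; but whichever direction this relation points (depending on the parity of $m$), it is trivially preserved because the two values are equal. It is also immediate that $\epsilon_m$ is itself order-preserving for the pointwise order on the path spaces, so it is continuous.

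Next I would observe that by construction $q_{n,m+1} \circ \epsilon_m = q_{n,m}$, because the endpoints $\epsilon_m(\gamma)((m+1)_j) = \gamma(m_j)$ are exactly the coordinates of $q_{n,m}(\gamma)$. Therefore, if $\{Q_i\}_{i=1}^{r}$ is an open cover of $P^n$ realizing $\ccnm{n}{m}(P) = r$, with sections $s_i : Q_i \to P^{J_{n,m}}$ of $q_{n,m}$, then $s'_i := \epsilon_m \circ s_i$ is a continuous section of $q_{n,m+1}$ on $Q_i$. The same cover thus witnesses $\ccnm{n}{m+1}(P) \leq r$, which is the desired inequality.

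There is no real obstacle here: the argument is purely formal and parallel to the length-$m$ versus length-$(m+1)$ comparison in Tanaka's original setting, the only delicate point being the parity check at the newly attached layer, which is immediately settled by the constant extension.
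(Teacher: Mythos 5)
Your proposal is correct and is essentially the same argument as the paper's: your constant-extension map $\epsilon_m$ is exactly precomposition $\gamma \mapsto \gamma \circ R$ with the retraction $R : J_{n,m+1} \to J_{n,m}$ that collapses $(m+1)_j$ onto $m_j$, which is how the paper phrases it. Both proofs then use the commuting triangle $q_{n,m+1}\circ \epsilon_m = q_{n,m}$ to push forward the sections over the same open cover.
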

 
 \begin{proof}
 Let $\ccnm{n}{m}(P) = r$ and  $\{Q_i\}_{i=1}^r$ be an open cover of $P^n$ with section $s_i : Q_i \rightarrow P^{J_{n, m}}$. Consider the retraction map $R : J_{n, m+1} \rightarrow J_{n, m}$ sending each $(m+1)_i$ to $m_i$ for $1\leq i\leq n$. This is clearly an order preserving map. It will induce a map $R^* : P^{J_{n, m}} \rightarrow P^{J_{n, m+1}}, \gamma \rightarrow \gamma \circ R$ such that the following diagram commutes:
 \begin{center}
 $ \begin{tikzcd}
P^{J_{n, m}} \arrow{rr}{R^{*}} \arrow[swap]{dr}{q_{n, m}}& &P^{J_{n, m+1}} \arrow{dl}{q_{n, m+1}}\\
& P^n & 
\end{tikzcd}$
 \end{center}
 The composition $R^* \circ s_i : Q_i \rightarrow P^{{n, m+1}}$ is a section of $q_{n, m+1}$ for each $i$. Thus, $\ccnm{n}{m+1}(P) \leq r$.
 \end{proof} 
 
 \begin{definition} \label{dccn}
 For a finite space $P$ we define $\ccn{n}(P)$ is the minimum of the $\ccnm{n}{m}(P)$
 \begin{center}
 $$\ccn{n}(P) = \min_{m \geq 1}\{ \ccnm{n}{m}(P)\} = \lim_{m \rightarrow \infty}\ccnm{n}{m}(P).$$ 
 \end{center}
 \end{definition}

 We now give an alternative description of $\ccn{n}(P)$. Later we will use both the description interchangably. Consider the space $P^{J_{(n-1)m}}$ and the projection map $$q'_{n,m} : P^{J_{(n-1)m}} \rightarrow P^n,~~ q'_{n,m}(\alpha) = (\alpha (0), \alpha(m),\cdots, \alpha ((n-1)m)).$$

 \noindent Denote by $\ccdnm{n}{m}(P)$ the smallest positive integer $r'$ such that $P^n$ can be cover by $r'$ open sets $\{Q_i\}_{i=1}^{r'}$ with a section $s_i : Q_i \rightarrow P^{J_{(n-1)m}}$ of $q'_{n, m}$ for each $i$. We have a analogue of Lemma \ref{ccnm}. 
 
 \begin{lemma}\label{lccdmn}
 For any $m \geq 0, n\geq 2$, it holds that $\ccdnm{n}{m+1}(P) \leq \ccdnm{n}{m}(P)$.
 \end{lemma}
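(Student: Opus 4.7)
The proof will follow the template of Lemma \ref{ccnm}. I plan to produce an order-preserving map $R' : J_{(n-1)(m+1)} \to J_{(n-1)m}$ satisfying $R'(j(m+1)) = jm$ for every $j = 0, 1, \ldots, n-1$. Given such an $R'$, the induced map
\[
(R')^* : P^{J_{(n-1)m}} \to P^{J_{(n-1)(m+1)}}, \qquad \alpha \mapsto \alpha \circ R'
\]
is order-preserving and satisfies $q'_{n,m+1} \circ (R')^* = q'_{n,m}$, so given any open cover $\{Q_i\}_{i=1}^{r}$ of $P^n$ with sections $s_i : Q_i \to P^{J_{(n-1)m}}$ of $q'_{n,m}$, the composites $(R')^* \circ s_i$ are sections of $q'_{n,m+1}$ on the same open sets, witnessing $\ccdnm{n}{m+1}(P) \leq r$.

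The substantive step is the construction of $R'$. Each of the $n-1$ gaps between consecutive checkpoints of the longer fence has one more edge than the corresponding gap of the shorter fence, so $R'$ must \emph{pause} (take equal values on two consecutive indices) exactly once per gap, for a total of $n-1$ pauses. The subtlety is a parity constraint: adjacent indices in a fence are comparable only when one is even and the other odd, so an honest move $R'(i+1) = R'(i) \pm 1$ is order-preserving only when $R'(i) \equiv i \pmod 2$, whereas each pause flips this parity offset. Hence the pauses must either be paired up at adjacent indices or sit at the very end of the domain.

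Concretely, I would place a pair of pauses at positions $(2k-1)(m+1) - 1$ and $(2k-1)(m+1)$ for each $k = 1, 2, \ldots, \lfloor (n-1)/2 \rfloor$, so that $R'$ takes the constant value $(2k-1)m$ on the three consecutive indices straddling the checkpoint at position $(2k-1)(m+1)$; when $n$ is even, add one final pause at position $(n-1)(m+1) - 1$, forcing $R'((n-1)(m+1) - 1) = R'((n-1)(m+1)) = (n-1)m$. Elsewhere, set $R'(i)$ equal to $i$ minus the number of pauses at positions strictly less than $i$. A direct count confirms both the checkpoint equations $R'(j(m+1)) = jm$ and the fact that between consecutive pauses the cumulative pause count is even, so the parity condition is automatic at every move position.

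The main obstacle is this parity bookkeeping; once $R'$ is in hand, the rest is a diagram chase identical to the one in the proof of Lemma \ref{ccnm}.
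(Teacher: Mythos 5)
Your construction of $R'$ is exactly the paper's map $R$: both pause in a pair of adjacent steps around each odd checkpoint $i(m+1)$ (with a single trailing pause when $n$ is even so that the total number of pauses is $n-1$), and the induced $(R')^{*}$ makes the same triangle with $q'_{n,m}$ and $q'_{n,m+1}$ commute. Your explicit parity bookkeeping just spells out what the paper compresses into ``otherwise it is linear,'' so the approach is the same and the argument is correct.
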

 
 \begin{proof}
Let $q'_{n, m}$ has a section on an open set $Q_i$ of $P^n$. We define a order preserving map $$R : J_{(n-1)(m+1)} \rightarrow J_{(n-1)m}$$ which maps $i(m+1)-1$, $i(m+1)$ and $i(m+1)+1$ to $im$, if $i$ is odd otherwise it is linear. This will induce a map $R^{*} : P^{J_{(n-1)m}} \rightarrow P^{J_{(n-1)(m+1)}}$ such that the following diagram commutes:
 \begin{center}
 $ \begin{tikzcd}
P^{J_{(n-1)m}} \arrow{rr}{R^{*}} \arrow[swap]{dr}{q'_{n, m}}& &P^{J_{(n-1)(m+1)}} \arrow{dl}{q'_{n, m+1}}\\
& P^n & 
\end{tikzcd}$
 \end{center}
 The composition $R^* \circ s_i : Q_i \rightarrow P^{{(n-1)(m+1)}}$ is a section of $q'_{n, m+1}$ for each $i$. Thus, $\ccdnm{n}{m+1}(P) \leq \ccdnm{n}{m}(P)$.
 \end{proof} 
 
 \begin{definition}\label{dccn'}
 We define $\ccdn{n}(P)$ is the minimum of the $\ccdnm{n}{m}(P)$

 $$ \ccdn{n}(P) = \min_{m \geq 1}\{ \ccdnm{n}{m}(P)\} = \lim_{m \rightarrow \infty}\ccdnm{n}{m}(P).$$ 

 \end{definition}
 
 We now prove that the two Definitions \ref{dccn} and  \ref{dccn'} are equivalent.
 
 \begin{theorem}\label{thm}
 For any finite space $P$,  $\ccdn{n}(P) = \ccn{n}(P) $.
 \end{theorem}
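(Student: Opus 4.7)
The plan is to prove both inequalities $\ccdn{n}(P)\le\ccn{n}(P)$ and $\ccn{n}(P)\le\ccdn{n}(P)$ by constructing, in each direction, an order-preserving map between the models $J_{n,m}$ and $J_{(n-1)m'}$ that preserves the distinguished endpoints; the induced pullback of path spaces will then intertwine the evaluation fibrations $q_{n,m}$ and $q'_{n,m'}$ and transport sections across. This is the discrete analog of the equivalence between the two descriptions of $TC_n(X)$ (wedge of intervals versus marked points on one interval) given in Section 2.

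For the first inequality, suppose $\ccnm{n}{m}(P)=r$ with sections $s_i\colon Q_i\to P^{J_{n,m}}$ of $q_{n,m}$ over an open cover $\{Q_i\}_{i=1}^r$ of $P^n$. By Lemma \ref{ccnm} I may replace $m$ by $m+1$, so I may assume $m$ is even. I then construct $\phi\colon J_{2(n-1)m}\to J_{n,m}$ that traverses the wedge branch-by-branch: on the interval $[2km,(2k+1)m]$ it runs branch $k+1$ in reverse from $m_{k+1}$ down to the hub $0$, and on $[(2k+1)m,2(k+1)m]$ it runs branch $k+2$ forward from $0$ up to $m_{k+2}$, for $k=0,\dots,n-2$. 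In particular $\phi(2km)=m_{k+1}$ for $k=0,\dots,n-1$. The pullback $\phi^{\ast}(\gamma)=\gamma\circ\phi$ then satisfies $q'_{n,2m}\circ\phi^{\ast}=q_{n,m}$, so $\phi^{\ast}\circ s_i$ are sections of $q'_{n,2m}$ over the same open cover, giving $\ccdnm{n}{2m}(P)\le r$. Minimizing over $m$ gives $\ccdn{n}(P)\le\ccn{n}(P)$.

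For the second inequality, set $M=(n-1)m$ and suppose sections $t_i\colon Q_i\to P^{J_M}$ of $q'_{n,m}$ are given. I define $\psi\colon J_{n,M}\to J_M$ by $\psi(0)=0$ and, on the $i$-th branch, $\psi(j_i)=j$ for $0\le j\le (i-1)m$ and $\psi(j_i)=(i-1)m$ for $(i-1)m\le j\le M$. The linear portion is an order-isomorphism of the initial subfence of length $(i-1)m$, and the constant tail is trivially order-preserving, with the junction at $j=(i-1)m$ causing no issue because the zigzag relation there reduces to reflexivity. Since $\psi(M_i)=(i-1)m$, the pullback $\psi^{\ast}$ satisfies $q_{n,M}\circ\psi^{\ast}=q'_{n,m}$, and $\psi^{\ast}\circ t_i$ are sections of $q_{n,M}$ over the $Q_i$. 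Hence $\ccnm{n}{M}(P)\le\ccdnm{n}{m}(P)$, and so $\ccn{n}(P)\le\ccdn{n}(P)$.

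The delicate point is order-preservation of $\phi$: the distinguished points $2km$ in $J_{2(n-1)m}$ are always valleys of the zigzag, whereas the endpoint $m_i$ of a branch in $J_{n,m}$ is a valley precisely when $m$ is even; for $m$ odd the naive ``out-and-back'' assignment flips the zigzag at the very first step and fails to be order-preserving. This parity obstruction is exactly what Lemma \ref{ccnm} lets me eliminate by bumping $m$ up by one without increasing $\ccnm{n}{m}(P)$. The map $\psi$ in the other direction carries no such subtlety, since the constant tail on each branch absorbs any parity mismatch at the transition.
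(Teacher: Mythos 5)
Your argument is correct. For the inequality $\ccdn{n}(P)\le\ccn{n}(P)$ you build essentially the same map as the paper (a fence $J_{2(n-1)m}$ winding through the wedge branch by branch, out-and-back through the hub $0$), including the same parity adjustment via Lemma~\ref{ccnm}, so that half is effectively identical. For the reverse inequality $\ccn{n}(P)\le\ccdn{n}(P)$ you take a genuinely different route. The paper places the image of the wedge hub at the \emph{midpoint} $k=(n-1)m/2$ of $J_{(n-1)m}$ and runs the $j$-th branch monotonically to the left (for $j\le n/2$) or right (for $j>n/2$) before holding constant at $(j-1)m$; making all transition parities line up then forces $m$ to be a multiple of four. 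Your $\psi\colon J_{n,M}\to J_{M}$ with $M=(n-1)m$ instead sends the hub to the \emph{endpoint} $0$ and runs the $i$-th branch along the order-isomorphic initial segment $j_i\mapsto j$ up to $(i-1)m$ followed by a constant tail; the zigzag is preserved automatically on the linear part (same parity on both sides) and by reflexivity on the constant part, so no divisibility hypothesis on $m$ is needed. The only cost is that your target $J_{n,M}$ has branches roughly twice as long as the paper's $J_{n,k}$, which is immaterial since the definitions only require \emph{some} $m$ realizing each covering number. Both routes prove the theorem; yours eliminates the fussiest parity bookkeeping in the paper's argument.
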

 \begin{proof}
 Assume that $\ccn{n}(P) = r$. Take an open cover $\{Q_i\}_{i=1}^r$ of $P^n$ with order preserving section $s_i : Q_i \rightarrow P^{J_{n, m}}$ of $q_{n, m}$, for some $m$ and each $i$. Since $\ccnm{n}{m}$ is decreasing with respect to $m$ by Lemma \ref{ccnm} , we can assume that $m$ is even. Define an order preserving map $f : J_{(n-1)2m} \rightarrow J_{n, m}$ by sending each element of $J_{(n-1)2m} $  to the corresponding element of the following path of  $J_{n, m}$:
 \begin{center}
$$m_1 \leq m_1 - 1 \geq \cdots\leq 1_1 \geq 0 \leq 1_2 \geq \cdots\leq m_2-1 \geq m_2 \leq m_2-1 \geq \cdots \leq 1_2 \geq 0 \leq 1_3 $$

$$\geq \cdots \leq m_3-1 \geq m_3 \leq m_3-1 \geq \cdots \leq 1_3 \geq 0 \leq \cdots \geq 0 \leq 1_n \geq \cdots \leq m_n-1 \geq m_n.$$
 \end{center}
 
 \noindent This map induces $f^{*} : P^{J_{n, m}} \rightarrow P^{J_{(n-1)2m}}$ such that the following triangle commutes.
 \begin{center}
 $ \begin{tikzcd}
P^{J_{n, m}} \arrow{rr}{f^{*}} \arrow[swap]{dr}{q_{n, m}}& &P^{J_{(n-1)2m}} \arrow{dl}{q'_{n, 2m}}\\
& P^n & 
\end{tikzcd}$
 \end{center}
 So the composition map $f^{*} \circ s_i : Q_i \rightarrow P^{J_{(n-1)2m}}$ is a section of $q'_{n, 2m}$ on $Q_i$ for each $i$. Thus $\ccdnm{n}{2m}(P) \leq r$ i.e. $\ccdn{n}(P) \leq r = \ccn{n}(P)$.
 
Conversely, assume that $\ccdn{n}(P) = r'.$ Then there are sections $s_i : Q_i \rightarrow P^{J_{(n-1)m}}$ of $q'_{n, m}$ on open subsets $Q_i$ of $P^n$ coving it, for some $m$, ~$1\leq i \leq r'$. We can take $m$ as a multiple of four, by Lemma \ref{lccdmn}. Let $k = \frac{(n-1)m}{2}$ so that $k$ is even. Define an order preserving retraction map $g : J_{n, k} \rightarrow J_{(n-1)m}$ by sending $0 \leq 1_j \geq 2_j \leq \cdots \geq k_j $ to ($1\leq j \leq n$):

$$ k \leq k-1 \geq k-2 \leq \cdots \geq (j-1)m \leq (j-1)m \geq (j-1)m \leq \cdots \geq (j-1)m,~~\text{if} ~~j \leq \frac{n}{2}$$ 
$$k \leq k+1 \geq k+2 \leq \cdots \geq (j-1)m \leq (j-1)m \geq (j-1)m \leq \cdots \geq (j-1)m ~~~ \text{if} ~~j > \frac{n}{2}.$$

 \noindent As in previous case the map $g$ map induces $g^{*} : P^{J_{(n-1)m}} \rightarrow P^{J_{n ,k}}$ such that the following triangle commutes.
 \begin{center}
 $ \begin{tikzcd}
P^{J_{(n-1)m}} \arrow{rr}{g^{*}} \arrow[swap]{dr}{q'_{n, m}}& &P^{J_{n ,k}} \arrow{dl}{q_{n, k}}\\
& P^n & 
\end{tikzcd}$
 \end{center}
 So the composition map $g^{*} \circ s_i : Q_i \rightarrow P^{J_{k, n}}$ is a section of $q_{n, k}$ for each $1\leq i \leq r'$. Thus $\ccnm{n}{k}(P)\leq  r'$ i.e. $\ccn{n}(P) \leq \ccdn{n}(P)$. Hence $\ccdn{n}(P) = \ccn{n}(P)$.
 \end{proof}
 
  To abuse the notation we will only use $\ccn{n}(P)$ for both the descriptions.  Similar to the topological setting, we have an upper bound of $\ccn{n}(P)$ in terms of $L\text{-}S$ category of $P^n$. 
 \begin{lemma}
 It holds that $\ccn{n}(P) \leq cat(P^n)$ for all $n \geq 2$.
 \end{lemma}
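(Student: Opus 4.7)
The plan is to adapt the standard topological argument $TC_n(X) \leq cat(X^n)$ to the combinatorial setting. Start with a categorical cover $\{Q_i\}_{i=1}^{k}$ of $P^n$, where $k = cat(P^n)$ and each inclusion $Q_i \hookrightarrow P^n$ is null-homotopic. The goal is to upgrade this cover to a combinatorial motion planner: on each $Q_i$, produce a section of $q_{n,m}$ for a common sufficiently large $m$, from which $\ccnm{n}{m}(P) \leq k$ and therefore $\ccn{n}(P) \leq cat(P^n)$.

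For each $i$, realize the null-homotopy of $Q_i \hookrightarrow P^n$ as a combinatorial homotopy, i.e.\ an order-preserving map $H_i : Q_i \times J_{M_i} \to P^n$ with $H_i(x, 0) = x$ and $H_i(x, M_i) = (a_1^i, \ldots, a_n^i)$ for some fixed target depending on $i$. Composing with the $j$-th projection yields, for each $x \in Q_i$, a combinatorial path from $x_j$ to $a_j^i$ that varies order-preservingly in $x$. Since $P$ is connected, fix a basepoint $p_0 \in P$ and, once and for all, combinatorial paths from $p_0$ to every element of $P$, all of a common length $N$. Reversing each projected homotopy path (extending it by one constant step if necessary to repair parity) and concatenating it with the fixed $p_0$-to-$a_j^i$ path yields, for each $x \in Q_i$ and each $j$, an order-preserving path in $P$ from $p_0$ to $x_j$.

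After padding so that the $n$ resulting paths share a common length $m_i$, and then invoking Lemma~\ref{ccnm} to replace each $m_i$ by a single $m$ valid uniformly in $i$, the $n$ paths assemble into an order-preserving map $J_{n,m} \to P$ sending $0$ to $p_0$ and $m_j$ to $x_j$. This gives a section $s_i : Q_i \to P^{J_{n,m}}$ of $q_{n,m}$. Order-preservation of $s_i$ as a function of $x$ follows because each building block — the projected homotopy, the reversal, the constant padding, and the concatenation with the $x$-independent $p_0$-to-$a_j^i$ paths — is itself order-preserving in $x$. The bound $\ccnm{n}{m}(P) \leq k$ is then immediate.

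The main obstacle is the careful bookkeeping with fence lengths and parities: unlike topological paths, combinatorial paths can be reversed or concatenated only up to parity constraints, and the $n$ legs of the target fence $J_{n,m}$ must all have exactly length $m$. Both issues are benign because padding with constant subpaths of length two preserves both order-preservation and parity, and Lemma~\ref{ccnm} allows one to enlarge $m$ freely without affecting the bound. Beyond this bookkeeping I do not anticipate any conceptual obstacle.
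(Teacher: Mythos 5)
Your proof is correct and takes essentially the same route as the paper: start from a categorical cover of $P^n$, project a combinatorial contracting homotopy on each $Q_i$ onto the $n$ factors, splice in a fixed network of connecting paths, and assemble the result into a section of $q_{n,m}$, using Lemma~\ref{ccnm} to equalize lengths across the cover. The only cosmetic difference is that you run the contraction from the identity to the constant and then reverse each projected path, whereas the paper runs the contraction the other way and uses the hub vertex of a $J_{n,k}$-valued path $\gamma$ as the implicit basepoint, which sidesteps the reversal-parity bookkeeping you handle by padding.
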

 \begin{proof}
 Let $cat(P^n) = r$. Then there exists contractible open cover  $\{Q_i\}_{i=1}^{r}$ of $P^n$. For a fixed $i,$ let $Q_i$ be contractible to $(x_1, x_2,\cdots,x_n) \in P^n$. Since $P$ is connected there exists a positive integer $k$ and a map $$\gamma : J_{n, k} \rightarrow P \text{ such that } q_{ n,k} (\gamma) = (x_1, x_2,\cdots,x_n).$$ Now let $(p_1, p_2,\cdots,p_n) \in Q_i$ be an arbitrary element. Choose a contracting homotopy of $Q_i$ in $P^n$, $H : Q_i \times J_l \rightarrow P^n$ such that $H(p_1, p_2,\cdots,p_n, 0) = (x_1, x_2,\cdots,x_n)$ and $H(p_1, p_2,\cdots,p_n, l) = (p_1, p_2,\cdots,p_n)$. Applying exponential law we get projection maps $H_j : Q_i \rightarrow P^{J_l}$ such that $H_j(p_1, p_2,\cdots,p_n)(l) = p_j$ and $H_j(p_1, p_2,\cdots,p_n)(0) = x_j$ for $j = 1, 2,\cdots,n$. Assume $\gamma_j: J_k \to P$ be $j$-th component of $\gamma$ which is obtained by composing $\gamma$ with $n$-different inclusions of $J_k$ inside $J_{n,k}$. Set $\lambda_j = \gamma_j\ast H_j$, concatenation of $H_j$ and $\gamma_j$. Let $\lambda : J_{n, k+l} \rightarrow P$ or $\lambda : J_{n, k+l+1} \rightarrow P$ according to $l$ even or odd, is the map whose components are $\lambda_i$'s. Define $s_i(p_i, p_2,\cdots,p_n) = \lambda$, whose projection nothing but $(p_i, p_2,\cdots,p_n)$. Set $m_i = k+l$ or $k+l+1$ according to $l$ even or odd and $m = \max \{m_1, m_2,\cdots,m_r\}$. Then $\ccnm{n}{m}(P) \leq r$ and therefore $\ccn{n}(P) \leq cat(P^n)$ for all $n \geq 2$.
 \end{proof}
 
Now we will prove the equality between $\ccn{n}(P)$ and $TC_n(P)$.  For this first we recall order complex $\mathcal{K}(P)$ of a finite space $P$. 
 
 \begin{definition}\label{oc}
 The \emph{order complex} $\mathcal{K}(P)$ of a finite space $P$ is a simplicial complex whose $n$-simplices are linearly ordered subsets of $P$. Its geometric realization is denoted by $\mid \mathcal{K}(P) \mid$.
 
 \end{definition}
 \begin{example}\label{exsn}
Let $\mathbb{S}^m$ denote the finite space consisting of $2m+2$ points $$\{e_+^0, e_-^0,\cdots, e_+^m, e_-^m\},$$ with the partial order defined by $e_p^k \leq e_q^l$ if $k \leq l$ and $p, q \in \{+, -\}$. The realization of the order complex $\mid \mathcal{K}(\mathbb{S}^m) \mid$ is homeomorphic to the sphere $S^m$ with dimension $m$. 
\end{example}

 \begin{theorem}\label{tctp}
For any finite space $P$, it holds that $\ccn{n}(P) = TC_n(P)$, $n \geq 2$.
\end{theorem}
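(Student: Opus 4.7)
The plan is to prove both inequalities $TC_n(P) \leq \ccn{n}(P)$ and $\ccn{n}(P) \leq TC_n(P)$, extending Tanaka's argument for $n = 2$ from \cite{tana}. I will work with the equivalent formulation $\ccdn{n}(P) = \ccn{n}(P)$ provided by Theorem \ref{thm}, since the projection $q'_{n,m}: P^{J_{(n-1)m}} \to P^n$ samples a combinatorial path at the $n$ equally spaced positions $0, m, 2m, \dots, (n-1)m$, matching the evaluation map $e'_n$ at times $0, \tfrac{1}{n-1}, \dots, 1$.

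For the upper bound $TC_n(P) \leq \ccn{n}(P)$, I construct an order-preserving ``blow-up'' map $\Phi_m : P^{J_{(n-1)m}} \to P^I$ sending a combinatorial zigzag $\alpha$ to a continuous step-function $\widetilde\alpha$ with $\widetilde\alpha(k/((n-1)m)) = \alpha(k)$ at every integer sample. The building block is the standard fact that for a comparable pair $x \leq y$ in a finite $T_0$ space, the formula $t \mapsto x$ for $t \in [0,1)$ with value $y$ at $t = 1$ is continuous (and its time-reversal gives the opposite direction); these elementary paths glue along the zigzag structure of $J_{(n-1)m}$ to define $\widetilde\alpha$. By construction $e'_n \circ \Phi_m = q'_{n,m}$, and $\Phi_m$ is order-preserving because open sets in $P$ are lower sets and $\beta \leq \alpha$ pointwise implies $\widetilde\beta \leq \widetilde\alpha$ pointwise. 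Composing any combinatorial section of $q'_{n,m}$ with $\Phi_m$ yields a continuous section of $e'_n$ over the same open set, giving $TC_n(P) \leq \ccdnm{n}{m}(P)$ for every $m$, and hence $TC_n(P) \leq \ccn{n}(P)$.

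For the reverse inequality, let $\{(U_i, \sigma_i)\}_{i=1}^r$ realize $TC_n(P) = r$. Since $P^n$ is a finite $T_0$ space each $U_i$ is finite, and since $P$ is finite each continuous path $\sigma_i(\vec p) \in P^I$ has only finitely many transitions; so I can choose an even $m$ large enough to resolve all transition structures uniformly across the cover. I then define $\widetilde\sigma_i(\vec p) \in P^{J_{(n-1)m}}$ by setting $\widetilde\sigma_i(\vec p)(k) = \sigma_i(\vec p)(k/((n-1)m))$ at the valley (even) positions, and at each peak (odd) position inserting a common upper bound in $P$ of the two adjacent valley samples, chosen from the image of $\sigma_i(\vec p)$ on the corresponding sub-interval. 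Such an upper bound exists because any continuous path in a finite $T_0$ space joining two elements must pass through a common upper bound of those elements, by a connectedness argument on the pre-images of the open sets $U_x$. Then $\widetilde\sigma_i(\vec p)(jm) = \sigma_i(\vec p)(j/(n-1)) = p_{j+1}$, so $\widetilde\sigma_i$ is a section of $q'_{n,m}$; order-preservation at the valleys is automatic from continuity of the evaluation maps $\mathrm{ev}_{t_k} : P^I \to P$, while peak choices are made coherently over the stratification of $U_i$ by minimum open neighborhoods $V_{\vec p} = U_{p_1} \times \cdots \times U_{p_n}$. This gives $\ccdnm{n}{m}(P) \leq r$, hence $\ccn{n}(P) \leq TC_n(P)$.

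The main obstacle is the coherent choice of peak values in the lower bound. The condition $\vec q \leq \vec p$ in $U_i$ forces $\sigma_i(\vec q)(t) \leq \sigma_i(\vec p)(t)$ pointwise, so the valley data is automatically order-preserving; but each peak value is a choice in $P$, and making this choice compatible with the partial order on $U_i$ requires exploiting the bottom-up structure of the minimum open neighborhoods. Verifying this, together with uniformity of $m$, is the technical heart of the argument, whereas the upper bound is essentially formal.
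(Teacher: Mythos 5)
Your upper bound $TC_n(P)\leq\ccn{n}(P)$ is correct and is essentially identical to the paper's argument: your ``blow-up'' map $\Phi_m$ is precisely the map $\beta^*\colon P^{J_{(n-1)m}}\to P^I$ obtained by precomposing with the McCord map $\alpha\colon [0,(n-1)m]\cong|\mathcal K(J_{(n-1)m})|\to J_{(n-1)m}$ followed by the linear homeomorphism $I\cong[0,(n-1)m]$, and the triangle $e'_n\circ\beta^*=q'_{n,m}$ holds because $\alpha$ fixes integers.

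For the reverse inequality $\ccn{n}(P)\leq TC_n(P)$ there is a genuine gap, and you have in fact located it yourself (``the coherent choice of peak values is the technical heart''). The issue is that you apply the Lebesgue-number argument to each continuous path $\gamma_{\vec p}=\sigma_i(\vec p)\colon I\to P$ separately, obtaining for each $\vec p$ a time $t^*_{\vec p}$ with $\gamma_{\vec p}(a),\gamma_{\vec p}(b)\leq\gamma_{\vec p}(t^*_{\vec p})$. But there is no reason why a single $t^*$ should work for every $\vec p\in Q_i$, and if the times differ there is no reason the chosen peaks $x(\vec p)=\gamma_{\vec p}(t^*_{\vec p})$ are monotone in $\vec p$. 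Your valley-values are order-preserving because they come from fixed evaluation maps $\mathrm{ev}_{t}\colon P^I\to P$; but a pointwise-chosen peak $x(\vec p)$ carries no such guarantee, and the ``stratification by minimum open neighborhoods'' does not by itself resolve the commutativity constraints in the Hasse diagram of $Q_i$ (one needs $x(\vec r)\leq x(\vec q_1),x(\vec q_2)\leq x(\vec p)$ whenever $\vec r\leq\vec q_1,\vec q_2\leq\vec p$, and there is no canonical choice in an arbitrary poset).

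The paper avoids this entirely by applying the exponential law \emph{before} discretizing: $\sigma_i\colon Q_i\to P^I$ corresponds to a single continuous path $f_1\colon I\to P^{Q_i}$ in the finite space $P^{Q_i}$, and the Lebesgue-number/finite-space argument is applied once to $f_1$ to produce a combinatorial path $f_2\colon J_s\to P^{Q_i}$, reparametrized by $\phi\colon J_{(n-1)m}\to J_s$ so that $f_2\circ\phi$ hits $f_1(k/(n-1))$ at $km$. Re-exponentiating, every value of $f_2\circ\phi$ is an order-preserving map $Q_i\to P$, so coherence across $Q_i$ is automatic. Your proof should be repaired along those lines: do not choose peaks $\vec p$-by-$\vec p$; instead perform the discretization in the mapping space $P^{Q_i}$, where one Lebesgue number serves all of $Q_i$ simultaneously. (As a minor point, the claim ``any continuous path joining two elements passes through a common upper bound of those elements'' is false in general --- a continuous path realizing the zigzag $0\leq1\geq2\leq3\geq4$ in the fence $J_4$ joins $0$ to $4$, which have no common upper bound; the statement is only correct for sufficiently short subpaths, which is what your Lebesgue argument actually supplies.)
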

\begin{proof}
Let us first show that $TC_n(P) \geq \ccn{n}(P)$. Assume that $TC_n(P) = r$ with an open cover $\{Q_i\}_{i=1}^n$ of $P^n$ and a continuous section $Q_i \rightarrow P^I$ of $e$ for each $i$. This induces a map $f_1 : I \rightarrow P^{Q_i}$ by the exponential law. Hence we obtain a order preserving map $f_2 : J_s \rightarrow P^{Q_i}$ for some $s \geq 0$ by the homotopy theory of finite spaces. Now we can construct a order preserving map $\phi : J_{(n-1)m} \rightarrow J_s$ for some $m \geq 0$, such a way that $f_1(\frac{k}{n-1}) = f_2\circ\phi (km),~~ k = 0, 1,\cdots , n-1$. So we have a combinatorial section $Q_i \rightarrow P^{J_{(n-1)m}}$ of $q'_{n, m}$. Hence $\ccn{n}(P) \leq r$.

  For the other inequality, assume $\ccn{n}(P) = r$ with open cover $\{Q_i\}_{i=1}^n$ of $P^n$ and a continuous section $s_i : Q_i \rightarrow P^{J_{(n-1)m}}$ for some $m \geq 0$ of $q'_{n, m}$, for all $i$.

 \noindent Let $\alpha : [0, (n-1)m] \simeq \mid \mathcal{K}(J_{(n-1)m}) \mid \rightarrow J_{(n-1)m}$ be denote the continuous map (see \cite{mc}):
\begin{center}
$\alpha (t) = \begin{cases}
2k-1 & \mbox{ if } ~~ t = 2k-1\\
2k &\mbox{if } ~~ 2k-1 < t < 2k+1.
 \end{cases}$
\end{center}

Note that, $\alpha (jm) = jm, j = 0, 1,\cdots,(n-1)$. Let $\beta : I \rightarrow J_{(n-1)m}$ denote the composition of $\alpha$ and the homeomorphism  $I = [0, 1] \simeq [0, (n-1)m]$, given by $t \rightarrow (n-1)mt$. This induces $\beta^* : P^{J_{(n-1)m}} \rightarrow P^I$ such that the diagram commutes:
\begin{center}
$ \begin{tikzcd}
P^{J_{(n-1)m}} \arrow{rr}{\beta^{*}} \arrow[swap]{dr}{q'_{n, m}}& &P^{I} \arrow{dl}{e_n}\\
& P^n & 
\end{tikzcd}$
\end{center}
The composition $\beta^* \circ s_i : Q_i \rightarrow P^I$ is a continuous section for the fibration $e_n$. So $TC_n(P) \leq r$.
Thus $\ccn{n}(P) = TC_n(P)$, $n \geq 2$.
\end{proof}

\begin{corollary}\label{cor1}
\begin{enumerate}[(a)]
\item For any finite space $P$ we have $ \ccn{n}(P) \leq \ccn{n+1}(P)$ , $n \geq 2$. 

\item A finite space $P$ is contractible if and only if $\ccn{n}(P) = 1$ for any $n \geq 2$.

\item The $\ccn{n}$ is homotopy invariant, i.e. $\ccn{n}(P) = \ccn{n}(Q)$ if two spaces $P$ and $Q$ are homotopy equivalent.

\item For any finite space $P$, the following inequalities hold:

$$cat(P) \leq \ccn{n}(P) \leq cat(P^n) \leq cat(P)^n.$$

\end{enumerate}
\end{corollary}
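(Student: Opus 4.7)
The plan is to deduce all four parts of the corollary directly from Theorem \ref{tctp}, which identifies $\ccn{n}(P)$ with $TC_n(P)$, combined with the already-recalled properties of higher topological complexity listed in Section 2.

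First I would handle (a) and (b) in one shot. Since the preliminaries recall that $\{TC_n(X)\}$ is a non-decreasing sequence and that $TC_n(X)=1$ iff $X$ is contractible, applying Theorem \ref{tctp} gives
\[
\ccn{n}(P)=TC_n(P)\leq TC_{n+1}(P)=\ccn{n+1}(P),
\]
and $\ccn{n}(P)=1 \iff TC_n(P)=1 \iff P\text{ is contractible}$. The only thing to verify is that the notion of contractibility used in the finite-space setting agrees with the topological one, but a finite $T_0$ space is contractible as a topological space in exactly the poset-theoretic sense used throughout the paper (this is standard, e.g.\ from Stong/McCord), so no subtlety enters.

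Next, for (c), I would again invoke Theorem \ref{tctp}. If $P\simeq Q$ as finite spaces, then in particular they are homotopy equivalent as topological spaces, and since higher topological complexity is homotopy invariant (as recalled in Section 2),
\[
\ccn{n}(P)=TC_n(P)=TC_n(Q)=\ccn{n}(Q).
\]

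Finally, for (d), the chain of inequalities $cat(X)\leq TC_n(X)\leq cat(X^n)\leq cat(X)^n$ is stated verbatim in the preliminaries for an arbitrary space $X$. Setting $X=P$ and applying Theorem \ref{tctp} again immediately yields
\[
cat(P)\leq \ccn{n}(P) \leq cat(P^n) \leq cat(P)^n.
\]
There is no genuine obstacle here: the main theorem of the section does all the work, and every inequality has already been recorded topologically in Section 2. The one thing that deserves a short sentence in the written proof is the implicit use of the fact that the classical invariants $TC_n$ and $cat$ of a finite $T_0$ space coincide with the ones being studied combinatorially, which is exactly the content of Theorem \ref{tctp} (and, for $cat$, the analogous statement that can be proved by the same homotopy-theoretic translation via $J_m$-paths already used in the preceding lemma bounding $\ccn{n}(P)$ by $cat(P^n)$).
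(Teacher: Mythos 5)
Your proposal is correct and matches the paper's own proof, which is a one-line appeal to Theorem \ref{tctp} together with the properties of $TC_n$ recalled in Section~2. You simply spell out the individual applications more explicitly than the paper does; no new idea is introduced and none is missing.
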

\begin{proof}
This follows from above Theorem \ref{tctp} and the corresponding inequalities about $TC_n(P)$. 

\end{proof}

\begin{example}
Let $\mathbb{S}^m$ denote the finite space consisting  as in Example \ref{exsn}. Then

$$\ccn{n}(\mathbb{S}^m) = cat((\mathbb{S}^m)^n) = cat(\mathbb{S}^m)^n = 2^n \text{ for any } m \geq 1 \text{ and } n \geq 2.$$

\end{example}
\begin{proof}
We know that $cat(\mathbb{S}^m) = 2$ (see \cite[Example 3.7]{tana}) for any $m \geq 1$. So by Corollary \ref{cor1}, $\ccn{n}(\mathbb{S}^m) \leq cat(\mathbb{S}^m)^n = 2^n$. If $\ccn{n}(\mathbb{S}^m) < 2^n$, then there is an open set of $(\mathbb{S}^m)^n$ containing at least two distinguished maximal points of $(\mathbb{S}^m)^n$. This yields a contractible open set in $\mathbb{S}^m$ containing $e_+^m$ and $e_-^m$, that will be nothing but the entire space $\mathbb{S}^m$, which is not contractible. The contradiction implies that $\ccn{n}(\mathbb{S}^m) = 2^n$. Thus, $\ccn{n}(\mathbb{S}^m) = cat((\mathbb{S}^m)^n) = cat(\mathbb{S}^m)^n = 2^n$ for any $m \geq 1$ and $n \geq 2$.
\end{proof}

\end{section}
\begin{section}{Higher combinatorial complexity with barycentric subdivision}

The combinatorial complexity $\cckn{k}{n}(P)$  does not capture the topological complexity of the naturally associated simplicial complex $\mathcal{K}(P)$. From the example of the $\mathbb{S}^m$ we see that $\cckn{k}{n}(\mathbb{S}^m)$ is much higher than $TC_n({S}^m)$. To remedy the situation, we refine the definition of $\cckn{k}{n}(P)$ using barycentric subdivision of $P$. 
We first recall barycentric subdivision of a finite space $P$. Then we define higher combinatorial complexity with barycentric subdivision and show it is equal to the topological complexity of $|\mathcal{K}(P)|$.
\begin{definition}
   The \emph{barycentric subdivision} $\sd(P)$ of $P$ is defined as the face poset $\mathcal{X}(\mathcal{K}(P))$ of the order complex $\mathcal{K}(P)$ (\ref{oc}). 
\end{definition}
 \noindent Let $\tau_P : \sd(P) \rightarrow P$ be the map sending $p_0 \leq p_1 \leq \cdots \leq p_n$ to the last element $p_n$. This is a weak homotopy equivalence, and the induced simplicial map $\mathcal{K}(\tau_P) : \mathcal{K}(\sd(P)) = \sd(\mathcal{K}(P)) \rightarrow \mathcal{K}(P)$ is a simplicial approximation of the identity on $\mid \mathcal{K}(P) \mid$ (see \cite{har}). For $k \geq 0$, we denote by  $\tau_P^k : \sd^k(P) \rightarrow P$ the composition
   \begin{center}
      $ \sd^k(P) \xrightarrow {\tau_{\sd^{k-1}(P)}} \sd^{k-1}(P) \xrightarrow {\tau_{\sd^{k-2}(P)}} \cdots   \xrightarrow {\tau_{\sd(P)}} \sd(P) \xrightarrow {\tau_P} P$.
   \end{center}
  
   \begin{definition}
Let $k \geq 0$. We define $\cckn{k}{n}(P)$ as the smallest nonnegative integer $r$ such that there exist an open cover $\{Q_i\}_{i=1}^r$ of $\sd^k (P^n)$ and an positive integer $m$, with a map $s_i : Q_i \rightarrow P^{J_{n, m}}$ such that $q_{n, m} \circ s_{i} =  \tau_{P^n}^k$ on $Q_i$ for each $i$. If no such $r$ exists, then we define $\cckn{k}{n}(P)$ to be $\infty$.
\end{definition}

  If we define $\ccdkn{k}{n}(P)$ as taking $P^{J_{(n-1)m}}$ instead of $P^{J_{n, m}}$ and $q'_{n, m}$ of $q_{n ,m}$ then we get same positive integer. Obviously, $\ccn{n}(P) = \cckn{0}{n}(P)$ by the definition above.
 
\begin{lemma}
For any finite space $P$ and $n\geq 2$, we have: 
\begin{enumerate}[(a)]
 \item $\ccdkn{k}{n}(P) = \cckn{k}{n}(P) $.
 \item $\cckn{k+1}{n}(P) \leq \cckn{k}{n}. $ 
 \end{enumerate}
 \end{lemma}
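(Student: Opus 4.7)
My plan is to carry over the two key facts already established in Section 4 --- the equivalence of the $n$-fence and $(n-1)m$-fence descriptions (Theorem \ref{thm}) and monotonicity in the fence length (Lemma \ref{ccnm}) --- to the subdivided setting, exploiting the observation that the extra compatibility requirement $q \circ s = \tau_{P^n}^k$ is transported by exactly the same constructions.

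For part (a), I would repeat verbatim the argument of Theorem \ref{thm}. Given an open cover $\{Q_i\}_{i=1}^r$ of $\sd^k(P^n)$ with maps $s_i : Q_i \to P^{J_{n,m}}$ satisfying $q_{n,m} \circ s_i = \tau_{P^n}^k$, I would first apply (if needed) the retraction $R : J_{n,m+1} \to J_{n,m}$ from Lemma \ref{ccnm} to replace $m$ by an even integer; since $R^*$ fits into a commutative triangle over $P^n$, the composition $R^* \circ s_i$ still satisfies $q_{n,m+1} \circ R^* \circ s_i = \tau_{P^n}^k$. Then I would use the order-preserving map $f : J_{(n-1)2m} \to J_{n,m}$ built in Theorem \ref{thm}. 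The triangle $q'_{n,2m} \circ f^* = q_{n,m}$ verified there at once gives $q'_{n,2m} \circ (f^* \circ s_i) = q_{n,m} \circ s_i = \tau_{P^n}^k$, yielding $\ccdkn{k}{n}(P) \leq \cckn{k}{n}(P)$. The reverse inequality follows by precisely the same reasoning using the map $g : J_{n,k} \to J_{(n-1)m}$ from the other direction of Theorem \ref{thm}.

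For part (b), the key observation is that the structural map $\tau_{\sd^k(P^n)} : \sd^{k+1}(P^n) \to \sd^k(P^n)$ is order-preserving, hence continuous, and by the very definition of the iterated map we have $\tau_{P^n}^{k+1} = \tau_{P^n}^k \circ \tau_{\sd^k(P^n)}$. Given $\cckn{k}{n}(P) = r$ realised by data $\{(Q_i, s_i)\}_{i=1}^r$ with $s_i : Q_i \to P^{J_{n,m}}$, I would set $Q_i' = \tau_{\sd^k(P^n)}^{-1}(Q_i)$ --- an open cover of $\sd^{k+1}(P^n)$ --- and put $s_i' = s_i \circ \tau_{\sd^k(P^n)}|_{Q_i'}$. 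Then $q_{n,m} \circ s_i' = \tau_{P^n}^k \circ \tau_{\sd^k(P^n)}|_{Q_i'} = \tau_{P^n}^{k+1}|_{Q_i'}$, which is exactly what is required for $\cckn{k+1}{n}(P) \leq r$.

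The main point requiring care is a bookkeeping matter rather than a genuine obstacle: in part (a) one has to verify at each stage (the parity adjustment via $R$, the map $f$, and its dual $g$) that the relevant triangle over $P^n$ still commutes, so that the compatibility condition $q \circ s = \tau_{P^n}^k$ is carried along with the section. This is essentially automatic once the diagrams of Theorem \ref{thm} are transcribed, but needs to be checked. Part (b) presents no real difficulty beyond using the correct definition of $\tau_{P^n}^{k+1}$ as a composition through $\tau_{\sd^k(P^n)}$.
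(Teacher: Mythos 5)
Your proposal is correct and takes the same route the paper intends: part (a) transplants the fence-reparametrization maps $R$, $f$, $g$ from Theorem \ref{thm}, noting that the commutative triangles over $P^n$ automatically carry along the extra constraint $q\circ s = \tau_{P^n}^k$; part (b) is the standard pull-back along the order-preserving (hence continuous) map $\tau_{\sd^k(P^n)}$, together with the factorization $\tau_{P^n}^{k+1}=\tau_{P^n}^k\circ\tau_{\sd^k(P^n)}$, which is exactly Tanaka's argument for $n=2$. You simply spell out the bookkeeping the paper leaves implicit.
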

 \begin{proof}
 (a)This proof is similar as the proof of the Theorem (\ref{thm}).
 
 (b) The result for $n = 2$ was proved in \cite[Lemma 4.3]{tana}. In similar way we can prove this result.
 \end{proof}
 
 \begin{definition} 
 We define the \emph{$n$-th combinatorial complexity} $\cckn{\infty}{n}(P)$ of $P$ to be

  $$\cckn{\infty}{n}(P) = \lim_{k \rightarrow \infty} \cckn{k}{n}(P) = \min_{k \geq 0} \{\cckn{k}{n}(P)\}. $$

\end{definition}

Now we relate $\cckn{\infty}{n}(P)$ to the $n$-th topological complexity $TC_n(\mid \mathcal{K} (P) \mid )$ of geometric realization of the order complex of $P$. For this we need the following lemma.  For $k \geq 0$, let $\rho_j : \sd^k(P^n) \rightarrow P$ denote the composition of $\tau^k_{P^n} : \sd^k(P^n) \rightarrow P^n$ and the $j$-th projection for $j = 1, 2,\cdots,n$.

\begin{lemma}\label{lemma}
With notations as above, $\cckn{\infty}{n}(P) \leq r$ if and only if there exist $k \geq 0$ and an open cover $\{Q_i\}_{i = 1}^r$ of $\sd^k(P^n)$ such that $\rho_1, \rho_2,\cdots,\rho_n : Q_i \rightarrow P$ are in same homotopy class of maps.
\end{lemma}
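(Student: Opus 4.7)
The statement is the combinatorial analogue, for $\sd^k(P^n)$, of Lemma~\ref{lemm}, and I would prove it by translating back and forth between ``section of $q_{n,m}$'' and ``order-preserving map with prescribed restrictions.'' Recall the exponential correspondence for finite spaces: a map $s : Q \to P^{J_{n,m}}$ with $q_{n,m}\circ s = \tau_{P^n}^k|_Q$ is the same data as an order-preserving map $H : Q \times J_{n,m} \to P$ with $H(x,m_j)=\rho_j(x)$ for every $j$, and likewise for $q'_{n,m}$ with $J_{(n-1)m}$ in place of $J_{n,m}$ and indices $(j-1)m$ in place of $m_j$. Throughout I would switch freely between the two fence shapes using the equality $\ccdkn{k}{n}(P)=\cckn{k}{n}(P)$ from the preceding lemma.

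\textbf{Necessity ($\Rightarrow$).} Assume $\cckn{\infty}{n}(P)\leq r$, witnessed by some $k,m$ and sections $s_i : Q_i \to P^{J_{n,m}}$ over a cover $\{Q_i\}_{i=1}^r$ of $\sd^k(P^n)$. For each $j$, restrict the adjoint map $H_i$ to the sub-fence $\{0, 1_j, 2_j, \dots, m_j\}\subset J_{n,m}$, which is order-isomorphic to $J_m$. This yields an order-preserving map $Q_i\times J_m \to P$ from the common map $x\mapsto s_i(x)(0)$ (independent of $j$, since all fences share the point $0$) to $\rho_j$. Hence every $\rho_j$ is homotopic to $x\mapsto s_i(x)(0)$, and so the $\rho_j$ all lie in one and the same homotopy class on $Q_i$.

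\textbf{Sufficiency ($\Leftarrow$).} Assume instead that over each member of an open cover $\{Q_i\}_{i=1}^r$ of $\sd^k(P^n)$ the maps $\rho_1,\dots,\rho_n$ are pairwise homotopic. For each $i$ and $j=1,\dots,n-1$, choose an order-preserving homotopy $H_i^{\,j}:Q_i\times J_{l_{i,j}}\to P$ from $\rho_j$ to $\rho_{j+1}$. By the padding retraction trick used in the proof of Lemma~\ref{lccdmn} (repeating the terminal entry of a zigzag to enlarge its length by $2$), I can arrange that every $l_{i,j}$ equals a common \emph{even} integer $m$. Concatenating $H_i^{\,1},\dots,H_i^{\,n-1}$ in the fence direction then produces an order-preserving map $\widetilde H_i : Q_i\times J_{(n-1)m}\to P$ with $\widetilde H_i(x,(j-1)m)=\rho_j(x)$ for every $j$; because $m$ is even, the zigzag pattern matches at every join and no extra interpolation point is needed (compare the even case of the concatenation formula in Section~4). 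Passing to adjoints gives a section of $q'_{n,m}$ over $Q_i$, so $\ccdkn{k}{n}(P)\leq r$, and by part~(a) of the preceding lemma $\cckn{\infty}{n}(P)\leq\cckn{k}{n}(P)\leq r$.

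\textbf{Main obstacle.} The only real bookkeeping is in the sufficiency direction: aligning all the homotopy lengths $l_{i,j}$ to a single value $m$ with good parity, so that after concatenation the maps $\rho_1,\dots,\rho_n$ land precisely at the equally spaced points $0,m,\dots,(n-1)m$ of $J_{(n-1)m}$. This is handled by the same padding retraction already exploited in Lemma~\ref{lccdmn}; no further homotopy-theoretic input is needed.
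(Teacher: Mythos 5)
Your proof is correct and follows essentially the same route as the paper: both directions rest on the exponential-law correspondence between sections of $q_{n,m}$ (or $q'_{n,m}$) and order-preserving homotopies with prescribed endpoint restrictions, with concatenation of homotopies supplying the sufficiency direction. The only differences are cosmetic: you use the wedge fence $J_{n,m}$ for necessity (concluding that each $\rho_j$ is homotopic to the common origin map $x\mapsto s_i(x)(0)$) where the paper uses the linear fence $J_{(n-1)m}$ and reads off the chain $\rho_1\simeq\cdots\simeq\rho_n$ directly, and you spell out the padding-to-a-common-even-length step that the paper leaves implicit.
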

\begin{proof}
Let us assume that $\cckn{\infty}{n}(P) \leq r$. Then for some $k \geq 0$ there exist an open cover $\{Q_i\}_{i=1}^r$ of $\sd^k(P^n)$  and a map $s_i : Q_i \rightarrow  P^{J_{(n-1)m}}$  such that $q'_{n, m} \circ s_i = \tau^k_{P^n}$ on $Q_i$ for each $i$.
\begin{center}
$\begin{tikzcd}
Q_i \arrow[hookrightarrow]{d} \arrow{r}{s_i}
& P^{J_{(n-1)m}} \arrow{d}{q'_{n, m}} \\
\sd^k(P^n) \arrow{r}[swap]{\tau^k_{P^n}}
& P^n
\end{tikzcd}$
\end{center}
We define a homotopy $H : Q_i \times J_{(n-1)m} \rightarrow P$ by
$$H(p_1, p_2,\cdots,p_n, x) = s_i (p_1, p_2,\cdots ,p_n)(x),$$ where  $(p_1, p_2,\cdots ,p_n) \in Q_i , ~~x\in J_{(n-1)m}.$ Then 
$$H(p_1, p_2,\cdots,p_n, (j-1)m) = s_i (p_1, p_2,\cdots ,p_n)((j-1)m)= \rho_j(p_1, p_2,\cdots,p_n), $$
for $j \in \{1, 2,\cdots , n\}$. This shows that the maps $\rho_1, \rho_2,\cdots,\rho_n : Q_i \rightarrow P$ are in same homotopy class of maps.

Conversely, assume that $\{Q_i\}_{i = 1}^r$  is an open cover of $\sd^k(P^n)$ such that  $\rho_1, \rho_2,\cdots,\rho_n : Q_i \rightarrow P$ are in same homotopy class of maps for each $i$. Then there exist homotopies $H_j : Q_i \times J_m \rightarrow P$ for some $m$ and $j = 1, 2,\cdots, n-1$, such that $H_j$ is a homotopy between $\rho_{j}$ and $\rho_{j+1}$. Define $s_i : Q_i \rightarrow P^{J_{(n-1)m}}$ by 

 $$s_i (p_1, p_2,\cdots,p_n)(x) =[ H_1*H_2 *\cdots*H_{n-1}](p_1, p_2,\cdots,p_n, x),$$
where $\ast$ denotes concatenation and  $x\in J_{(n-1)m}$. So we have $$q'_{n, m} \circ s_i(p_1, p_2,\cdots,p_n)(j) = \rho_j (p_1, p_2,\cdots,p_n).$$ Thus $q'_{n, m} \circ s_i(p_1, p_2,\cdots,p_n) = \tau^{k}_{P^n} (p_1, p_2,\cdots,p_n),~~ 1\leq j\leq n$. This gives a section over each $Q_i$ for $i =1, \cdots, r.$ Hence $\cckn{\infty}{n}(P) \leq r.$

\end{proof}

We now prove the main result of the section, generalising \cite[Theorem 4.9  ]{tana}

\begin{theorem}\label{tsc}
For any finite space $P$, we have $\cckn{\infty}{n}(P) = SC_n(\mathcal{K}(P))$, $n \geq 2$.
\end{theorem}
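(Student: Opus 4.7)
The plan is to generalize Tanaka's proof of \cite[Theorem 4.9]{tana} (the $n=2$ case), using Lemma \ref{lemma} as the bridge between the finite-space-theoretic and simplicial-complex-theoretic conditions. The argument will rest on three standard ingredients: (i) the identification $\mathcal{K}(\sd^k(X)) = \sd^k(\mathcal{K}(X))$ for a finite space $X$, arising from the face-poset construction; (ii) the McCord-style correspondence between open subsets of a finite space and subcomplexes of its order complex; and (iii) Proposition \ref{prop}, which promotes homotopy to contiguity after sufficiently many subdivisions.

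For the forward inequality $\cckn{\infty}{n}(P) \leq SC_n(\mathcal{K}(P))$, I would start with subcomplexes $L_1,\dots,L_r$ covering $\sd^k(\mathcal{K}(P)^n)$ on which the projections $\pi_j$ lie in a common contiguity class. Passing to geometric realizations yields an open cover of $\mid \mathcal{K}(P) \mid^n = \mid \mathcal{K}(P^n) \mid$ over which the continuous projections are pairwise homotopic. Using (i) together with the McCord weak equivalence $\mid \mathcal{K}(P^n) \mid \to P^n$, I would pull this back to an open cover of some $\sd^{k'}(P^n)$ whose projections $\rho_1,\dots,\rho_n$ are homotopic as maps of finite spaces. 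An application of Lemma \ref{lemma} then gives $\cckn{\infty}{n}(P) \leq r$.

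For the reverse inequality, the plan is to begin with an open cover $\{Q_i\}_{i=1}^r$ of $\sd^k(P^n)$ produced by Lemma \ref{lemma}, on which $\rho_1,\dots,\rho_n$ are all finite-space homotopic. Correspondence (ii) converts this into a subcomplex cover of $\sd^k(\mathcal{K}(P^n))$, and the finite-space homotopies realize to topological homotopies between the induced simplicial projections. Proposition \ref{prop} then makes these projections contiguous after further subdivision. A final step, passing from the subdivision of $\mathcal{K}(P^n)$ to a subdivision of $\mathcal{K}(P)^n$ via a common simplicial refinement, produces the required subcomplex cover witnessing $SC_n(\mathcal{K}(P)) \leq r$.

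The main obstacle I foresee is the discrepancy between $\mathcal{K}(P^n)$ and $\mathcal{K}(P)^n$: these are distinct simplicial complexes that only share their geometric realization, while $SC_n$ is defined using the product $\mathcal{K}(P)^n$ whereas the McCord functor naturally outputs $\mathcal{K}(P^n)$. Handling this requires careful use of common subdivisions, together with Proposition \ref{prop} to upgrade homotopies to contiguities whenever a cover is transferred across the two triangulations. Once this bookkeeping is in place, the rest of the proof reduces to the $n=2$ argument of Tanaka, applied with $n$ maps $\rho_1,\dots,\rho_n$ (respectively $\pi_1,\dots,\pi_n$) in place of a pair.
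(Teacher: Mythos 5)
Your plan diverges from the paper in a way that opens a real gap. The paper never passes to geometric realizations in this proof: it stays entirely in the combinatorial category and uses Barmak--Minian's strong homotopy results (Propositions 4.11 and 4.12 of \cite{bar}), which translate \emph{directly} between homotopy of order-preserving maps of finite spaces and contiguity classes of the induced simplicial maps of order complexes, and conversely between contiguity of simplicial maps and homotopy of the induced maps of face posets. Concretely, in one direction an open cover $\{Q_i\}$ of $\sd^k(P^n)$ with $\rho_1,\dots,\rho_n$ homotopic becomes, via $\mathcal{K}(-)$, a subcomplex cover $\{\mathcal{K}(Q_i)\}$ of $\mathcal{K}(\sd^k(P^n)) = \sd^k(\mathcal{K}(P^n))$ with $\pi_1,\dots,\pi_n$ in a common contiguity class by \cite[Prop.~4.11]{bar}; in the other direction a subcomplex cover $\{L_i\}$ with $\pi_j$ contiguous becomes, via $\mathcal{X}(-)$, an open cover $\{\mathcal{X}(L_i)\}$ of $\sd^{k+1}(P^n)$ with $\mathcal{X}(\pi_j)$ homotopic by \cite[Prop.~4.12]{bar}, and a short naturality computation gives $\tau_P \circ \mathcal{X}(\pi_j) = \rho_j$. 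No realization, no Proposition~\ref{prop}, no McCord map.

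The specific gap in your argument is the step ``pull this back to an open cover of some $\sd^{k'}(P^n)$ whose projections $\rho_j$ are homotopic as maps of finite spaces.'' The McCord map $|\mathcal{K}(P^n)| \to P^n$ is a weak homotopy equivalence in one direction only, and there is no general mechanism for converting a topological homotopy between realizations of maps of finite spaces into a finite-space homotopy after subdivision --- that conversion is \emph{exactly} what the Barmak--Minian machinery supplies (via contiguity of order-complex maps and then $\mathcal{X}$). Without citing or reproving a result of that kind, your forward inequality does not close. In the reverse direction you have a similar problem: after realizing and invoking Proposition~\ref{prop} to get contiguity on a subdivision, you would still need to come back to a subcomplex cover of $\sd^{k}(\mathcal{K}(P)^n)$ with the right projections, which again requires the combinatorial bridge.

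Finally, the ``discrepancy between $\mathcal{K}(P^n)$ and $\mathcal{K}(P)^n$'' that you flag as the main obstacle is not an issue in the paper's conventions: the simplicial product $\mathcal{K}(P)^n$ used in the definition of $SC_n$ is the ordered simplicial product, which for an order complex is precisely $\mathcal{K}(P^n)$. There is no common-refinement bookkeeping to do; the identification $\mathcal{K}(\sd^k(P^n)) = \sd^k(\mathcal{K}(P^n)) = \sd^k(\mathcal{K}(P)^n)$ is used as an equality throughout. The real missing ingredient in your write-up is the Barmak--Minian correspondence, not the product triangulation.
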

\begin{proof}
Assume that $\cckn{\infty}{n}(P) = r$. By the Lemma (\ref{lemma}) there exists $k \geq 0$ and an open cover $\{Q_i\}_{i=1}^{r}$ of $\sd^k(P^n)$ such that $\rho_1, \rho_2,\cdots,\rho_n : Q_i \rightarrow P$ are in same homotopy class of maps. Using  \cite[Proposition 4.11]{bar} we can say that the maps $$\mathcal{K}(\rho_1), \mathcal{K}(\rho_2),\cdots,\mathcal{K}(\rho_n) : \mathcal{K}(Q_i) \rightarrow \mathcal{K}(P)$$ lie in same contiguity class. The subcomplex $\mathcal{K}(Q_i)$ form a cover of $\mathcal{K}(\sd^k(P^n)) = \sd^k(\mathcal{K}(P^n))$ and $\mathcal{K}(\rho_j) = \mathcal{K}(pr_j \circ \tau^k_{P^n}) = \pi_j$, for $j = 1, 2,\cdots,n$. So, $SC_n^k(\mathcal{K}(P)) \leq r$ and then $SC_n(\mathcal{K}(P)) \leq r$.

 Conversely, assume that $SC_n(\mathcal{K}(P)) = r$. Then $SC_n^k(\mathcal{K}(P)) = r$ for some $k \geq 0$. Let $\{L_i\}_{i=1}^r$ be a covering of $\sd^k(\mathcal{K}(P^n))$ and the restriction $\pi_1, \pi_2,\cdots,\pi_n : L_i \rightarrow \mathcal{K}(P)$ lie in same contiguity class for each $i$. The  \cite[Proposition 4.12]{bar} implies that $\mathcal{X}(\pi_1), \mathcal{X}(\pi_2),\cdots,\mathcal{X}(\pi_n) : \mathcal{X}(L_i) \rightarrow \mathcal{X}(\mathcal{K}(P)) = \sd(P)$ are in same homotopy class of maps. The subsets $\mathcal{X}(L_i)$ form an open cover of $\mathcal{X}(\sd^k(\mathcal{K}(P^n))) = \sd^{k+1}(P^n)$. The naturality of $\tau$ makes the following diagram commute :
\begin{center}
$\begin{tikzcd}  
 \sd^{k+1}(P^n)\arrow{r}{\sd(\tau^k_{P^n})}\arrow{d}{\tau_{\sd^k(P^n)}} 
  & \sd(P^n)\arrow{r}{\sd(pr_j)}\arrow{d}{\tau_{P^n}} 
  & \sd(P)\arrow{d}{\tau_P} 
   \\
   \sd^k(P^n)\arrow{r}[swap]{\tau^k_{P^n}} 
  & P^n\arrow{r}[swap]{pr_j} 
  & P
\end{tikzcd}$
  \end{center}
  
 Also we have
 $$\tau_P \circ \mathcal{X}(\pi_j) = \tau_P \circ \mathcal{X}(\mathcal{K}(pr_j \circ \tau^k_{P^n}))
 = \tau_P \circ \sd(pr_j \circ \tau^k_{P^n}) = \tau_P \circ \sd(pr_j) \circ \sd(\tau^k_{P^n})$$
\begin{center}
$ \ \ \ \ \ \ \ \ \ \ \ \ \ \ \ \ \ \ \ \ \ \ \ \ \ \ \ \ \ \ \ \ \ \ = pr_j \circ \tau_{P^n}^{k+1} = \rho_j$ \ \ \ \ \ for $j = 1, 2,\cdots,n$.
\end{center}

So, $\cckn{k+1}{n}(P) \leq r$ and then $\cckn{\infty}{n}(P) \leq r$.
Thus $\cckn{\infty}{n}(P) = SC_n(\mathcal{K}(P))$, $n \geq 2$.
\end{proof}
Combining Theorem \ref{tsc} and Theorem \ref{tst} we have the following corollary.
\begin{corollary}
For  any finite space $P$, we have $\cckn{\infty}{n}(P) = TC_n(\mid \mathcal{K}(P) \mid )$, $n \geq 2$. 
\end{corollary}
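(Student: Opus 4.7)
The plan is simply to chain the two main theorems proved earlier in the paper. The statement claims $\cckn{\infty}{n}(P) = TC_n(|\mathcal{K}(P)|)$, and both intermediate quantities involved, namely $SC_n(\mathcal{K}(P))$, have already been shown to equal each of the two sides.

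First, I would invoke Theorem \ref{tsc}, which asserts $\cckn{\infty}{n}(P) = SC_n(\mathcal{K}(P))$ for any finite space $P$ and $n \geq 2$. Second, since $P$ is a finite poset, the order complex $\mathcal{K}(P)$ is a finite simplicial complex, so Theorem \ref{tst} applies with $K = \mathcal{K}(P)$ and yields $SC_n(\mathcal{K}(P)) = TC_n(|\mathcal{K}(P)|)$. Combining these two equalities gives the claim.

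There is essentially no obstacle here: the work was done in establishing Theorems \ref{tsc} and \ref{tst}; the only point worth mentioning explicitly is that finiteness of $P$ is needed to guarantee finiteness of $\mathcal{K}(P)$, which is the hypothesis required by Theorem \ref{tst}. So the proof amounts to one sentence citing both theorems.
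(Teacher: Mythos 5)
Your proof is correct and matches the paper's own argument exactly: the corollary follows immediately by combining Theorem \ref{tsc} with Theorem \ref{tst} applied to $K = \mathcal{K}(P)$. Your observation that finiteness of $P$ guarantees finiteness of $\mathcal{K}(P)$, as required by Theorem \ref{tst}, is a worthwhile detail to note.
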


\end{section}

\end{document}